\newcounter{constant}
\newcounter{bigconstant}
\newtheorem{theorem}{Theorem}[section]
\newtheorem{proposition}[theorem]{Proposition}
\newtheorem{lemma}[theorem]{Lemma}
\newtheorem{corollary}[theorem]{Corollary}
\newtheorem*{paley}{Paley-Wiener Theorem}
\numberwithin{equation}{section} 
\theoremstyle{definition}
\newtheorem{remark}[theorem]{Remark}
\newcommand{\R}{\mathbb{R}}
\newcommand{\T}{\mathbb{T}}
\newcommand{\N}{\mathbb{N}}
\begin{document}
    \title[Improved algebraic lower bound]{ Improved algebraic lower bound for the radius of spatial analyticity for the generalized KdV equation}
    \author[M. Baldasso and M. Panthee]{ Mikaela Baldasso and Mahendra Panthee}
    \address{Department of Mathematics, University of Campinas\\13083-859 Campinas, SP, Brazil}
\email{mikaelabaldasso@gmail.com, mpanthee@unicamp.br}
\thanks{This work was partially supported by CAPES, CNPq and FAPESP, Brazil.}
\maketitle

\begin{abstract}
    We consider the initial value problema (IVP) for the generalized Korteweg-de Vries (gKdV) equation
\begin{eqnarray*}
\begin{cases}
\partial_{t}u+\partial_{x}^{3}u+\mu u^{k}\partial_{x}u=0, \, &x\in \R, \, t \in \R,\\
u(x,0)=u_0(x),
\end{cases}
\end{eqnarray*}
where $u(x,\,t)$ is a real valued function, $u_0(x)$ is a real analytic function, $\mu=\pm 1$ and $k\geq 4$. We prove that if the initial data $u_0$ has radius of analyticity $\sigma_0$, than there exists $T_0>0$ such that the radius of spatial analyticity of the solution remains the same in the time interval $[-T_0, \, T_0]$. In the defocusing case, for $k\geq 4$ even, we prove that when the local solution extends globally in time, then for any $T\geq T_0$, the radius of analyticity cannot decay faster than $cT^{-\left(\frac{2k}{k+4}+\epsilon\right)}$, $\epsilon>0$  arbitrarily small and $c>0$ a constant. The result of this work improves the one obtained by Bona et al. in \cite{bona2005algebraic}.
\end{abstract}

\textit{Keywords:} Generalized KdV equation, Cauchy problem, radius of spatial analyticity, Bourgain spaces, Gevrey spaces, almost conserved quantity.

\textit{2020 AMS Subject Classification:} 35A20, 35B40, 35Q35, 35Q53.

\section{Introduction}\label{s:intro}

Consider the initial value problem (IVP) for the generalized Korteweg-de Vries (gKdV) equation:
\begin{eqnarray}\label{eq:gKdV}
\begin{cases}
\partial_{t}u+\partial_{x}^{3}u+\mu u^{k}\partial_{x}u=0, \, &x\in \R, \, t \in \R,\\
u(x,0)=u_0(x),
\end{cases}
\end{eqnarray}
where the unknown $u(x,t)$ and the initial data $u_0(x)$ are real-valued, $\mu=\pm 1$ and $k \geq 4$. When $\mu=1$, the equation \eqref{eq:gKdV} is known as focusing and defocusing for $\mu=-1$.

It is well-known that the real solution for the IVP \eqref{eq:gKdV} enjoys the mass conservation
\begin{equation}\label{eq:mass conversation}
    M(u(t))=\int_{\R} u^2(x,\,t)dx=M(u_0),
\end{equation}
and the energy conservation
\begin{equation}\label{eq:energy conservation}
    E(u(t))=\int_{\R}(\partial_x u)^2(x,\,t)dx -\frac{2\mu}{(k+1)(k+2)}\int_{\R}u^{k+2}(x,\,t)dx,
\end{equation}
and these quantities will be useful in this work.

Well-posedness issues of the IVP \eqref{eq:gKdV} for $k\geq 1$ with given initial data in the classical Sobolev spaces $H^s(\R)$ have been extensively studied by several authors, see \cite{colliander2003sharp}, \cite{farah2010global}, \cite{farah2010supercritical},  \cite{kenig1996bilinear}, \cite{kenig1993well} and references therein. Particularly, for $k=4$, Kenig, Ponce and Vega \cite{kenig1993well} proved the local well-posedness in $H^s(\R)$ for $s \geq 0$ which yields global solutions for small data. The global well-posedness for given data with Sobolev norm not so small was proved by Fonseca, Linares and Ponce  \cite{fonseca2003global}  in $H^s(\R)$, $s>3/4$. In \cite{farah2010global}, Farah improved the latter result and lowered the index to $s>3/5$ and posteriorly, the authors in \cite{miao2009low} extended the global well-posedness for $s>6/13$.  For $k\geq 5$, the best local well-posedness known results in $H^s(\R)$ are for $s>\frac{1}{2}-\frac{2}{k}$, which attain the critical scaling indices. In the defocusing case ($\mu=-1$) and $k$ even, Farah, Linares and Pastor, in \cite{farah2010supercritical}, established the global well-posedness in $H^s(\R)$ for $s>\frac{4(k-1)}{5k}$. For the focusing case ($\mu=1$) or the defocusing case ($\mu=-1$) with $k$ odd, the same authors also proved the global well-posedness in $H^1(\R)$ for sufficiently small data.

In this work we are interested in studying the well-posedness of the IVP \eqref{eq:gKdV} with real analytic initial data $u_0$. For this purpose, we consider $u_0$ in the Gevrey space $G^{\sigma,\,s}(\R)$, $\sigma>0$ and $s\in \R$, defined as
\begin{equation*}
    G^{\sigma,\,s}(\R)=\left\lbrace f \in L^2(\R):\, ||f||_{G^{\sigma,\,s}(\R)}^2=\int_{\R} \langle \xi\rangle^{2s}e^{2\sigma |\xi|}|\hat{f}(\xi)|^2d\xi <\infty\right\rbrace,
\end{equation*}
where $\hat{f}$ denotes the spatial Fourier transform of $f$,
\begin{equation*}
    \hat{f}(\xi)=\frac{1}{\sqrt{2\pi}}\int_{\R}e^{-ix\xi}f(x)dx.
\end{equation*}
The interest in these spaces is due to the following fact.

\begin{paley} Let $\sigma>0$ and $s \in \R$. Then, the following are equivalent:
\begin{enumerate}
    \item $f \in G^{\sigma,\,s}$.
    \item $f$ is the restriction to the real line of a function $F$ which is holomorphic in the strip
    \begin{equation*}
        S_{\sigma} =\{x+iy:\, x,\, y\in \R, \, |y|<\sigma\}
    \end{equation*}    
    and satisfies
    \begin{equation*}
        \sup_{|y|<\sigma}||F(x+iy)||_{H_x^s}<\infty.
    \end{equation*}
\end{enumerate}
\end{paley}
In this sense, $\sigma$ is called the uniform radius of analyticity and a natural question concerning this space is: given $u_0 \in G^{\sigma,\,s}$ is it possible to guarantee the existence of solution such that the radius of analyticity remains the same at least for short time? And after extending the local solution globally in time, how does the radius of analyticity evolve in time? This sort of questions for the dispersive equations are widely studied in the literature, see for example \cite{barostichi2021modified}, \cite{BHK}, \cite{bona2005algebraic}, \cite{figueira2023decay}, 
\cite{grujic2002}, \cite{hayashi}, \cite{kato1986}, \cite{selberg2015lower}, \cite{selberg2019radius}, \cite{selberg2017radius} and references therein. Se also \cite{HHP}, \cite{HKS}, \cite{HG}, \cite{QL} and references therein for the problems posed in the periodic domain $\T$.

Concerning the IVP \eqref{eq:gKdV} for $k\geq 1$ with data in the Gevrey spaces $G^{\sigma, \, s}(\R)$, we mention the work of Bona et al. in \cite{bona2005algebraic} where the authors proved the global well-posedness results and obtained algebraic lower bounds for the radius of analyticity as time advances. More precisely, for the KdV ($k=1$) and the mKdV ($k=2$) equations, they obtained $cT^{-12}$ as an algebraic lower bound for the evolution of the radius of analyticity and $cT^{-(k^2+3k+2)}$ for $k\geq 3$. Recently, in \cite{selberg2015lower}, Selberg and Silva introduced the concept of almost conserved quantity and improved the result in \cite{bona2005algebraic} obtaining $cT^{-(\frac{4}{3}+\epsilon)}$ as a lower bound for the radius of analyticity for the KdV equation. For the gKdV equation with $k=3$, Selberg and Tesfahun obtained, in \cite{selberg2017radius}, $cT^{-2}$ as a lower bound for the radius of spatial analyticity. In both the KdV equation  and the gKdV equation with $k=3$, almost conserved quantities at the $L^2$-level of Sobolev regularity are used.  For the mKdV equation $(k=2)$ no well-posedness result exists at the $L^2$-level of Sobolev regularity. This suggests that, if one wishes to use the idea of almost conserved quantity it is necessary to work in the higher level of Sobolev regularity. Quite recently,  the second author with  Figueira, in \cite{figueira2023decay}, constructed almost conserved quantity at the $H^1$-level of Sobolev regularity and obtained the lower bound $cT^{-\frac{4}{3}}$ for the radius of analyticity to the mKdV equation improving the result in \cite{bona2005algebraic}. Now, a natural question that arise is that whether  it is possible to construct almost conserved quantity for the gKdV equation with $k\geq 4$ so as to improve the lower bound for the radius of analyticity obtained in \cite{bona2005algebraic}.

In the present work, taking motivation from \cite{figueira2023decay}, we will respond the question raised in the previous paragraph affirmatively by constructing almost conserved quantity at the $H^1$-level of Sobolev regularity (see \eqref{eq:conserved quantity} and \eqref{eq:estimate energy} below) and improve significantly the result  in \cite{bona2005algebraic} for the defocusing gKdV equation \eqref{eq:gKdV} with $k\geq 4$ even.

 Regarding the local well-posedness, we first prove the following  result that is valid for both focusing and defocusing cases and for every integer $k\geq 4$.

\begin{theorem}\label{teo:local}
    Let $\sigma>0$, $k\geq 4$ and $s>\frac{k-4}{2k}$. For given $u_0 \in G^{\sigma,\, s}(\R)$, there exists $T_0=T_0(||u_0||_{G^{\sigma,\,s}})>0$ such that the IVP \eqref{eq:gKdV} admits a unique solution $u \in C([-T_0,\,T_0]; G^{\sigma,\, s}(\R))$. Moreover, the data-to-solution map is locally Lipschitz.
\end{theorem}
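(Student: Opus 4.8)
The plan is to run a contraction-mapping argument for the Duhamel integral equation in a Gevrey-modified Bourgain space, transferring the needed multilinear estimate from the known $H^s$-theory of Kenig, Ponce and Vega via a subadditivity trick on the analytic weight $e^{\sigma|\xi|}$. First I would introduce the relevant spaces: for $s,b\in\R$ and $\sigma\geq 0$, let $X^{\sigma,s,b}$ be defined by
\begin{equation*}
\|u\|_{X^{\sigma,s,b}}^2=\int_{\R^2}e^{2\sigma|\xi|}\langle\xi\rangle^{2s}\langle\tau-\xi^3\rangle^{2b}|\widetilde{u}(\xi,\tau)|^2\,d\xi\,d\tau,
\end{equation*}
where $\widetilde{u}$ is the space-time Fourier transform, together with its time-localized version $X^{\sigma,s,b}_{T_0}$. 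When $\sigma=0$ this is the classical Bourgain space $X^{s,b}$ adapted to the Airy operator $\partial_x^3$, and the Fourier multiplier $e^{\sigma|D_x|}$ (symbol $e^{\sigma|\xi|}$) acts as an isometry from $X^{\sigma,s,b}$ onto $X^{s,b}$ and from $G^{\sigma,s}$ onto $H^s$. Writing $\mu u^k\partial_x u=\tfrac{\mu}{k+1}\partial_x(u^{k+1})$, I reduce the problem to a fixed point of
\begin{equation*}
\Phi(u)(t)=\psi(t)e^{-t\partial_x^3}u_0-\tfrac{\mu}{k+1}\psi(t)\int_0^t e^{-(t-t')\partial_x^3}\partial_x\!\big(u^{k+1}\big)(t')\,dt',
\end{equation*}
with $\psi$ a smooth time cutoff. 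The standard linear estimates for the free term and the Duhamel term carry over verbatim to $X^{\sigma,s,b}$, since $e^{-t\partial_x^3}$ commutes with $e^{\sigma|D_x|}$ and preserves the weight $e^{2\sigma|\xi|}\langle\xi\rangle^{2s}$.

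The heart of the matter is the $(k+1)$-linear estimate
\begin{equation*}
\Big\|\partial_x\prod_{j=1}^{k+1}u_j\Big\|_{X^{\sigma,s,b-1}}\lesssim\prod_{j=1}^{k+1}\|u_j\|_{X^{\sigma,s,b}}
\end{equation*}
for a suitable $b\in(1/2,1)$ and precisely the stated regularity $s>\frac{k-4}{2k}=\frac12-\frac2k$, which is the scaling-critical Sobolev index for the gKdV equation. To prove it I would pass to the Sobolev setting by the elementary inequality $e^{\sigma|\xi|}\leq\prod_{j=1}^{k+1}e^{\sigma|\xi_j|}$, valid whenever $\xi=\xi_1+\cdots+\xi_{k+1}$ by subadditivity of $|\cdot|$. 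On the Fourier side, where the product corresponds to a convolution, this lets the analytic weight on the output frequency be absorbed into the product of the inputs' weights; setting $w_j=e^{\sigma|D_x|}u_j$ then reduces the Gevrey estimate to the Sobolev multilinear estimate
\begin{equation*}
\Big\|\partial_x\prod_{j=1}^{k+1}w_j\Big\|_{X^{s,b-1}}\lesssim\prod_{j=1}^{k+1}\|w_j\|_{X^{s,b}},
\end{equation*}
which is exactly the estimate underlying the local theory in $H^s$ for $s>\frac12-\frac2k$ and rests on the Strichartz and local-smoothing estimates for the Airy group.

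With these bounds in place, the contraction argument is routine: on a suitable ball of $X^{\sigma,s,b}_{T_0}$ one shows $\Phi$ is a contraction once $T_0=T_0(\|u_0\|_{G^{\sigma,s}})$ is small enough, exploiting the gain of a small power $T_0^{\theta}$ coming from the time localization ($\|u\|_{X^{\sigma,s,b'}_{T_0}}\lesssim T_0^{b-b'}\|u\|_{X^{\sigma,s,b}_{T_0}}$ for $b'<b$) to close the nonlinear estimate. This yields a unique fixed point $u\in C([-T_0,T_0];G^{\sigma,s})$, and applying the same multilinear bounds to the difference $u-v$ of two solutions, factored as $u^{k+1}-v^{k+1}=(u-v)\sum_{j=0}^{k}u^{j}v^{k-j}$, gives the locally Lipschitz dependence on the data. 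The main obstacle I anticipate is establishing the multilinear estimate sharply at the endpoint scaling $s>\frac12-\frac2k$: for $k\geq 4$ this sits exactly at the scaling threshold and demands the full strength of the Airy smoothing estimates together with a careful case analysis of high-high, high-low and resonant frequency interactions, rather than any soft argument; the transfer to the Gevrey spaces, by contrast, is purely mechanical once the weight inequality is invoked.
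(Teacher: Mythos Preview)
Your proposal is correct and matches the paper's approach essentially line by line: contraction in the Gevrey--Bourgain space $X^{\sigma,s,b}$, linear estimates transferred via commutation with $e^{\sigma|D_x|}$, the $(k+1)$-linear estimate reduced to the Sobolev case by the subadditivity bound $e^{\sigma|\xi|}\le\prod_j e^{\sigma|\xi_j|}$, and the same factorization $u^{k+1}-v^{k+1}=(u-v)\sum_{j=0}^{k}u^jv^{k-j}$ for Lipschitz dependence. The only minor discrepancy is bibliographic: the paper attributes the underlying $X^{s,b}$ multilinear estimate at $s>\frac{k-4}{2k}$ to \cite{miao2009low} (for $k=4$) and \cite{farah2010supercritical} (for $k\ge5$) rather than to Kenig--Ponce--Vega, whose original argument was not in Bourgain spaces.
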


Thus, for short time, the solution remains analytic in the same initial strip. Our main result concerning the global well-posedness and the evolution of the radius of analyticity is the following.

\begin{theorem}\label{teo:global}
    Let $\sigma_0>0$, $k\geq 4$ even, $s>\frac{k-4}{2k}$, $u_0 \in G^{\sigma,\, s}(\R)$ and $u$ be the local solution to the IVP \eqref{eq:gKdV} in the defocusing case ($\mu=-1$) given by Theorem \ref{teo:local}. Then, for any $T\geq T_0$, the local solution $u$ extends globally in time satisfying
    \begin{equation*}
        u\in C([-T,\,T];G^{\sigma(T),\, s}(\R)), \quad \text{with } \sigma(T) \geq \min \left\lbrace\sigma_0 , \, cT^{-\left(\frac{2k}{k+4}+\epsilon \right)}\right\rbrace,
    \end{equation*}
where $\epsilon>0$ is arbitrarily small and $c$ is a positive constant depending on $k$, $s$, $\sigma_0$ and $||u_0||_{G^{\sigma,\, s}}$.
\end{theorem}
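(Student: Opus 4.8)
The plan is to run an almost-conservation argument at the energy ($H^1$) level, in the spirit of \cite{figueira2023decay}, after reducing the low-regularity Gevrey data to that level. Let $e^{\sigma|D|}$ denote the Fourier multiplier with symbol $e^{\sigma|\xi|}$ and write $u_\sigma:=e^{\sigma|D|}u$, so that $\|u\|_{G^{\sigma,s}}=\|u_\sigma\|_{H^s}$. I would first exploit the elementary smoothing inclusion $G^{\sigma_0,s}\hookrightarrow G^{\sigma,1}$, valid for every $\sigma<\sigma_0$, since the gap in the exponential weight dominates the polynomial loss: $e^{2\sigma|\xi|}\langle\xi\rangle^{2}\le C\,e^{2\sigma_0|\xi|}\langle\xi\rangle^{2s}$ once $\sigma<\sigma_0$. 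Hence it suffices to propagate the energy-level information of $u_\sigma$ for radii $\sigma<\sigma_0$. In the defocusing case with $k$ even, $k+2$ is even, so the energy \eqref{eq:energy conservation} is positive definite and, together with the mass \eqref{eq:mass conversation}, controls the full norm: $\|u_\sigma\|_{H^1}^2\lesssim \mathcal{M}_\sigma+\mathcal{E}_\sigma$, where $\mathcal{M}_\sigma(t):=M(u_\sigma(t))$ and $\mathcal{E}_\sigma(t):=E(u_\sigma(t))$ are the quantities I would show to be almost conserved.

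Next I would invoke the local theory in the Gevrey--Bourgain spaces $X^{\sigma,s,b}$ underlying Theorem \ref{teo:local}, which provides a local existence time $\delta\sim(1+\|u_\sigma(0)\|_{H^1})^{-\gamma}$ on which $u_\sigma$ is controlled in $X^{\sigma,1,b}$ by its initial energy and mass. The point is that, as long as a bootstrap bound $\|u_\sigma\|_{H^1}\le 2\|u_\sigma(0)\|_{H^1}$ is maintained, $\delta$ stays bounded below by a fixed constant, so covering $[0,T]$ costs $n\sim T/\delta$ steps.

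The heart of the matter is the almost-conservation law. Since $u_\sigma$ solves $\partial_t u_\sigma+\partial_x^3 u_\sigma-e^{\sigma|D|}N(u)=0$ with $N(u)=\tfrac{1}{k+1}\partial_x(u^{k+1})$, differentiating $\mathcal{E}_\sigma$ and $\mathcal{M}_\sigma$ and using that both are exactly conserved when $e^{\sigma|D|}N(u)$ is replaced by $N(u_\sigma)$, I obtain error terms driven by $e^{\sigma|D|}N(e^{-\sigma|D|}u_\sigma)-N(u_\sigma)$. On the Fourier side each summand carries the factor $e^{\sigma(|\xi|-\sum_j|\xi_j|)}-1$ with $\xi=\sum_{j=1}^{k+1}\xi_j$, which by $|e^{-x}-1|\le x^\theta$ is bounded by $\bigl(\sigma(\sum_j|\xi_j|-|\xi|)\bigr)^{\theta}$ for any $\theta\in[0,1]$. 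This produces a gain of $\sigma^{\theta}$ at the cost of $\theta$ extra derivatives distributed among the factors; estimating the resulting $(k+2)$- and $(2k+2)$-linear forms in $X^{\sigma,1,b}$ then yields
\[
\sup_{[0,\delta]}\bigl(|\mathcal{E}_\sigma(t)-\mathcal{E}_\sigma(0)|+|\mathcal{M}_\sigma(t)-\mathcal{M}_\sigma(0)|\bigr)\le C\,\sigma^{\theta}\,\Lambda^{q},
\]
where $\Lambda$ is the $X^{\sigma,1,b}$ bound from the local step. I expect this to be the main obstacle: one must absorb the two derivatives in the $\int\partial_x^2 u_\sigma\,(\cdots)$ term and the high nonlinear degree using the fractional smoothing $(\sum_j|\xi_j|-|\xi|)^{\theta}$ together with the bilinear and maximal-function Airy estimates, and the largest admissible gain $\theta=\frac{k+4}{2k}-\epsilon$ is dictated by the critical $(k+1)$-linear estimate at the scaling regularity $s>\frac{k-4}{2k}$.

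Finally I would iterate: after $j$ steps, $\mathcal{E}_\sigma(j\delta)+\mathcal{M}_\sigma(j\delta)\le \mathcal{E}_\sigma(0)+\mathcal{M}_\sigma(0)+jC\sigma^{\theta}\Lambda^{q}$, and the bootstrap bound persists so long as the accumulated error stays below the initial value. Reaching time $T$ requires $j\sim T/\delta$ steps, so the condition reduces to $T\sigma^{\theta}\lesssim 1$, i.e. $\sigma\gtrsim T^{-1/\theta}$; taking $\theta$ as close to $\frac{k+4}{2k}$ as the multilinear estimates allow gives $1/\theta=\frac{2k}{k+4}+\epsilon$. Choosing $\sigma(T)=\min\bigl\{\sigma_0,\,cT^{-(\frac{2k}{k+4}+\epsilon)}\bigr\}$ then keeps $\|u_{\sigma(T)}\|_{H^1}$, hence $\|u(T)\|_{G^{\sigma(T),s}}$, bounded on $[0,T]$, which simultaneously extends the solution globally and delivers the stated lower bound; the interval $[-T,0]$ follows from the time-reversal symmetry of \eqref{eq:gKdV}.
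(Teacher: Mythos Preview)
Your proposal is correct and follows essentially the same route as the paper: reduce to the $H^1$ level via the Gevrey embedding, define the almost-conserved functional as (mass $+$ energy) of $u_\sigma=e^{\sigma|D|}u$, exploit $1-e^{-x}\le x^{\theta}$ to extract a $\sigma^{\theta}$ gain from the commutator $e^{\sigma|D|}N(e^{-\sigma|D|}u_\sigma)-N(u_\sigma)$, and iterate the local result with a uniform step $\delta$. The only cosmetic differences are that the paper packages mass and energy into a single functional $E_\sigma$, and that the decisive multilinear bound (your ``main obstacle'') is obtained there not via bilinear/maximal Airy estimates but by combining the $L^8$ Strichartz inequality with the $(k{+}1)$-linear estimate of Lemma~\ref{lemma:estimates partial} at regularity $j>\tfrac{k-4}{2k}$; the constraint $j+\theta\le 1$ then forces $\theta<\tfrac{k+4}{2k}$, exactly the threshold you anticipated.
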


To prove Theorem \ref{teo:global}, we derive an almost conserved quantity in $G^{\sigma,\,1}(\R)$ space using the conserved quantities \eqref{eq:mass conversation} and \eqref{eq:energy conservation}. Once having the almost conserved quantity at hand, we are able to prove the global result by decomposing any interval of time $[0,\,T]$ into short subintervals and iterate the local result in each subinterval. During this iteration process there appears a restriction that gives the lower bound for $\sigma(T)$.

This paper is organized as follows. In Section \ref{sec:2} we define the Bourgain's spaces and record some preliminary estimates. The proof of the local well-posedness result stated in Theorem \ref{teo:local} is contained in Section \ref{sec:3}. In Section \ref{sec:4} we derive the almost conserved quantity and find the associated decay estimates. Finally, in Section \ref{sec:5} we extend the local solution globally in time and obtain an algebraic lower bound for the radius of analyticity stated in Theorem \ref{teo:global}.

\section{Function Spaces and k-linear estimates}\label{sec:2}
In this section we discuss the function spaces that will be used throughout this work and derive some k-linear estimates that play crucial role in the proofs.

First, regarding the Gevrey space defined in Section \ref{s:intro}, we have the embedding
\begin{equation}\label{eq:Gegrey embedding}
    G^{\sigma,\,s}\subset G^{\sigma',\, s'} \quad \text{for all } 0<\sigma '<\sigma \text{ and } s, \, s' \in \R,
\end{equation}
and the inclusion is continuous in the sense that there exists a constant $C>0$ depending on $\sigma,\, \sigma ',\, s,\, s'$ such that
\begin{equation*}
    ||f||_{G^{\sigma ',\,s'}}\leq C||f||_{G^{\sigma,\,s}}.
\end{equation*}

In addition to the Gevrey space, we use a space that is a mix between the Gevrey space and the Bourgain's space introduced in \cite{bourgain1} and \cite{bourgain2}. Given $\sigma\geq 0$ and $s,\, b \in \R$, we define the Gevrey-Bourgain space, denoted by $X^{\sigma,\, s,\,b}(\R^2)$, as the closure of the Schwartz space under the norm
\begin{equation*}
||u||_{X^{\sigma,\, s,\,b}}=||e^{\sigma |\xi|}\langle \xi \rangle^{s} \langle \tau - \xi^{3} \rangle ^{b} \tilde{u}(\xi, \tau)||_{L^{2}_{\tau,\, \xi}},
\end{equation*} 
where $\langle \xi \rangle = 1+|\xi|$ and $\tilde{u}$ denotes the space-time Fourier transform of $u$. For $\sigma=0$, we recover the classical Bourgain's space  $X^{s,\,b}(\R^2)$ with the norm given by
\begin{equation*}
||u||_{X^{s,\,b}}=||\langle \xi \rangle^{s} \langle \tau - \xi^{3} \rangle ^{b} \tilde{u}(\xi, \tau)||_{L^{2}_{\tau,\, \xi}}.
\end{equation*}

For $T>0$ the restrictions of $X^{ s,\,b}(\R^2)$ and $X^{\sigma, \, s,\,b}(\R^2)$ to a time slab $\R \times (-T,\, T)$, denoted by $X_{T}^{ s,\,b}(\R^2)$ and $X_{T}^{\sigma, \, s,\,b}(\R^2)$, respectively, are Banach spaces when equipped with the norms
\begin{align*}
||u||_{X_{T}^{s,\,b}}&=\text{inf}\{||v||_{X^{s,\,b}}: v=u \text{ on } \R \times (-T,\, T)\},\\
||u||_{X_{T}^{\sigma, \,s,\,b}}&=\text{inf}\{||v||_{X^{\sigma, \, s,\,b}}: v=u \text{ on } \R \times (-T,\, T)\}.
\end{align*}

To simplify the exposition we introduce the operator $e^{\sigma |D_x|}$ given by
\begin{equation*}
\widehat{e^{\sigma |D_x|}u}(\xi)=e^{\sigma |\xi|}\hat{u}(\xi)
\end{equation*}
so that, one has
\begin{equation}\label{eq:exponential}
||e^{\sigma |D_x|}u||_{X^{s,\, b}}=||u||_{X^{\sigma,\,s,\, b}}.
\end{equation}
Substituting $u$ by $e^{\sigma |D_x|}u$, the relation \eqref{eq:exponential} allows us to carry out the properties of $X^{s, \, b}$ and $X_T^{s, \, b}$ spaces over $X^{\sigma,\,s,\, b}$ and $X_T^{\sigma,\,s,\, b}$ spaces.

Now we record some useful results that will be used in this  work. In the case $\sigma=0$, for the proof of the first lemma below we refer to Section 2.6 of \cite{tao2006nonlinear} and the second lemma follows by the argument used to prove Lemma 7 in \cite{selberg2019radius}. The proofs for $\sigma>0$ follow analogously using the relation \eqref{eq:exponential}.

\begin{lemma}Let $\sigma \geq 0$, $s \in \R$ and $b>\frac{1}{2}$. Then, $X^{\sigma,\,s,\, b}(\R^2)\subset C(\R,\, G^{\sigma,\, s})$ and
\begin{equation*}
\sup_{t \in \R} ||u(t)||_{G^{\sigma, \, s}} \leq C ||u||_{X^{\sigma,\,s,\, b}},
\end{equation*}
where the constant $C>0$ depends only on $b$.
\end{lemma}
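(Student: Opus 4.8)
The plan is to reduce immediately to the classical case $\sigma=0$ and then run the standard Cauchy--Schwarz argument in the dual time variable, with the hypothesis $b>\frac12$ entering precisely through the integrability of $\langle\tau-\xi^3\rangle^{-2b}$.

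First I would invoke the relation \eqref{eq:exponential}. Setting $w=e^{\sigma|D_x|}u$, one has $\|w\|_{X^{s,b}}=\|u\|_{X^{\sigma,s,b}}$ and, at each fixed time, $\|w(t)\|_{H^s}=\|u(t)\|_{G^{\sigma,s}}$, since the weight $e^{\sigma|\xi|}$ is carried identically on both sides. Thus it suffices to prove the statement for $\sigma=0$, namely $X^{s,b}(\R^2)\subset C(\R,H^s)$ with $\sup_t\|w(t)\|_{H^s}\le C\|w\|_{X^{s,b}}$; the general case then follows by applying $e^{-\sigma|D_x|}$, which is a bounded bijection between the corresponding spaces.

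For the bound when $\sigma=0$, fix $t$ and write the spatial Fourier transform, up to a fixed normalization constant $c$, as $\hat w(\xi,t)=c\int_{\R}e^{it\tau}\tilde w(\xi,\tau)\,d\tau$. Inserting the factor $\langle\tau-\xi^3\rangle^{-b}\langle\tau-\xi^3\rangle^{b}$ and applying Cauchy--Schwarz in $\tau$ gives
\[
|\hat w(\xi,t)|^2\le c^2\Big(\int_{\R}\langle\tau-\xi^3\rangle^{-2b}\,d\tau\Big)\Big(\int_{\R}\langle\tau-\xi^3\rangle^{2b}|\tilde w(\xi,\tau)|^2\,d\tau\Big).
\]
By the translation $\tau\mapsto\tau+\xi^3$ the first integral equals $C_b:=\int_{\R}\langle\tau\rangle^{-2b}\,d\tau$, which is finite exactly because $2b>1$; this is where the threshold $b>\frac12$ is used and is the only genuine ingredient of the argument. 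Multiplying by $\langle\xi\rangle^{2s}$, integrating in $\xi$, and using Fubini then yields $\sup_t\|w(t)\|_{H^s}^2\le c^2C_b\,\|w\|_{X^{s,b}}^2$, so that $C=c\sqrt{C_b}$ depends only on $b$, as claimed.

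Continuity is obtained by a density argument, which I expect to be the only point requiring a little care. Since $X^{s,b}$ is defined as the closure of the Schwartz space under its norm, I would pick Schwartz functions $w_n\to w$ in $X^{s,b}$; for each $w_n$ the map $t\mapsto w_n(t)$ is visibly continuous (indeed smooth) into $H^s$. The uniform bound just established gives $\sup_t\|w_n(t)-w(t)\|_{H^s}\le C\|w_n-w\|_{X^{s,b}}\to 0$, so $w_n(\cdot)\to w(\cdot)$ uniformly on $\R$ with values in $H^s$; a uniform limit of continuous functions is continuous, whence $w\in C(\R,H^s)$. Undoing the reduction via $e^{-\sigma|D_x|}$ then completes the proof, transferring both the embedding and the bound from $H^s$ to $G^{\sigma,s}$.
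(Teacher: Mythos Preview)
Your proof is correct and follows exactly the route the paper indicates: the paper does not write out a proof but refers to Section~2.6 of \cite{tao2006nonlinear} for the case $\sigma=0$ and then says the case $\sigma>0$ follows from the relation~\eqref{eq:exponential}, which is precisely your reduction $w=e^{\sigma|D_x|}u$. Your Cauchy--Schwarz argument in the $\tau$-variable and the density argument for continuity are the standard steps behind that reference, so you have simply supplied the details the paper omits.
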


\begin{lemma}\label{lemma:restriction} Let $\sigma \geq 0$, $s \in \R$ and $-\frac{1}{2}<b<\frac{1}{2}$ e $T>0$. Then, for any time interval $I \subset [-T, \, T]$, we have 
\begin{equation*}
||\chi_{I}u||_{X^{\sigma,\,s,\, b}} \leq C ||u||_{X_{T}^{\sigma,\,s,\, b}},
\end{equation*}
where $\chi_{I}$ is the characteristic function of $I$ and $C>0$ depends only on $b$.
\end{lemma}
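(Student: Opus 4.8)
The plan is to deduce the estimate from a single one–dimensional Sobolev multiplier bound in the time variable, peeling off the spatial structure through a chain of isometries. First I would dispose of the analytic weight. Since $\chi_I$ multiplies only in $t$ while $e^{\sigma|D_x|}$ acts only in $x$, the two operators commute, so by \eqref{eq:exponential} one has $||\chi_I u||_{X^{\sigma,\,s,\,b}}=||\chi_I(e^{\sigma|D_x|}u)||_{X^{s,\,b}}$ and likewise $||u||_{X_T^{\sigma,\,s,\,b}}=||e^{\sigma|D_x|}u||_{X_T^{s,\,b}}$ (as $e^{\sigma|D_x|}$ is a bijection commuting with restriction in time, it matches extensions with extensions). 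Hence it suffices to treat $\sigma=0$. Next I unwind the restriction norm: by definition it is enough to bound $||\chi_I u||_{X^{s,\,b}}$ by $C||w||_{X^{s,\,b}}$ for an arbitrary extension $w$ of $u$ (that is, $w=u$ on $\R\times(-T,T)$) and then take the infimum over $w$. Because $I\subset[-T,T]$, the functions $\chi_I u$ and $\chi_I w$ coincide on all of $\R^2$, so $||\chi_I u||_{X^{s,\,b}}=||\chi_I w||_{X^{s,\,b}}$, and the claim reduces to the \emph{multiplier estimate} $||\chi_I w||_{X^{s,\,b}}\le C||w||_{X^{s,\,b}}$ on the full space $X^{s,\,b}(\R^2)$, with $C$ uniform in the interval $I$.

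The second step strips away the dispersion and the spatial weight. Let $W(t)=e^{-t\partial_x^3}$ be the Airy group, and consider the map $(Sw)(\cdot,t)=W(-t)w(\cdot,t)$, which is an isometry of $X^{s,\,b}$ onto the space carrying the weight $\langle\xi\rangle^{s}\langle\tau\rangle^{b}$, i.e.\ $H^s_x H^b_t$. Since $\chi_I(t)$ is a scalar in $t$, it commutes with $W(-t)$, giving $S(\chi_I w)=\chi_I(Sw)$. As $\chi_I$ leaves the weight $\langle\xi\rangle^{s}$ untouched, Fubini in $\xi$ reduces the estimate to the one–dimensional statement: multiplication by $\chi_I$ is bounded on $H^b(\R)$ (the time variable) with a constant depending only on $b$, uniformly over all intervals $I$.

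Finally I would prove this one–dimensional bound. For $b=0$ it is the trivial $L^2$ contraction. For $0<b<\frac12$ I would use the Gagliardo seminorm $||g||_{\dot H^b}^2\simeq\iint|g(t)-g(t')|^2|t-t'|^{-1-2b}\,dt\,dt'$: writing $g=\chi_I f$ and splitting the double integral according to whether $t,t'$ lie inside or outside $I$, the diagonal region is controlled by $||f||_{\dot H^b}^2$, while the mixed region produces a boundary term of the form $\int_I|f(t)|^2\,\mathrm{dist}(t,\partial I)^{-2b}\,dt$, which is handled by the fractional Hardy inequality precisely because $b<\frac12$. Combining with the $L^2$ bound yields the inhomogeneous estimate, and the constant is seen to be independent of $I$ from the translation and scaling invariance of the homogeneous seminorm (the operator norm of multiplication by $\chi_{[0,\ell]}$ on $\dot H^b$ does not depend on $\ell$). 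For $-\frac12<b<0$ the result then follows by duality, since multiplication by $\chi_I$ is formally self-adjoint and $H^b=(H^{-b})'$ with $0<-b<\frac12$.

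The main — indeed the only — obstacle is this one–dimensional multiplier estimate, and within it the two sharp features: the uniformity in $I$ and the restriction $|b|<\frac12$. The endpoint $b=\frac12$ genuinely fails, the boundary term diverging logarithmically at $\partial I$, which is exactly why the hypothesis $-\frac12<b<\frac12$ cannot be relaxed. Everything preceding the one–dimensional step is a chain of exact isometric reductions and carries no analytic difficulty; this is consistent with the remark in the excerpt that the result follows by the argument proving Lemma 7 in \cite{selberg2019radius}.
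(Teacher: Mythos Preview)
Your proposal is correct and follows essentially the same route the paper indicates: the paper does not give a self-contained proof but simply refers to Lemma~7 in \cite{selberg2019radius} for $\sigma=0$ and invokes \eqref{eq:exponential} for $\sigma>0$, and your argument is precisely the standard proof of that lemma---reduce to $\sigma=0$ via \eqref{eq:exponential}, pass from the restricted norm to an arbitrary extension, conjugate by the Airy group to land in $H^s_xH^b_t$, and then use that $\chi_I$ is a uniformly bounded pointwise multiplier on $H^b(\R)$ for $|b|<\tfrac12$. Your sketch of the one-dimensional step (Gagliardo seminorm plus fractional Hardy for $0<b<\tfrac12$, duality for $-\tfrac12<b<0$) is one of the standard ways to establish that multiplier bound, and your remark on the scale/translation invariance correctly accounts for the uniformity in $I$.
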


In what follows, let $\psi \in C_0^{\infty}(-2,\,2)$ be a cut-off function with $0\leq \psi \leq 1$, $\psi(t)=1$ on $[-1,\,1]$ and $\psi_T(t)=\psi\left(\frac{t}{T}\right)$ for $T>0$.

Consider the following IVP, for given $F(x,t)$ and $u_0(x)$,
\begin{eqnarray}\label{eq:Cauchy problem}
\begin{cases}
\partial_{t}u+\partial_{x}^{3}u=F, \\
u(x,0)=u_0(x).
\end{cases}
\end{eqnarray}
Using the Duhamel's formula we may write the IVP \eqref{eq:Cauchy problem} in its equivalent integral equation form as
\begin{equation*}
u(t)=W(t)u_0-\int_0^t W(t-t') F(t')dt',
\end{equation*}
where $W(t)=e^{-t\partial_{x}^{3}}=e^{itD_x^{3}}$ is the semigroup associated to the linear problem. The semigroup $W(t)$ satisfies the following estimates in the $X^{\sigma,\,s,\, b}$ spaces. For a detailed proof we refer to \cite{bona2005algebraic}, \cite{ginibre1995probleme} and references therein.

\begin{lemma}\label{lemma: semigroup estimates}Let $\sigma \geq 0$, $s \in \R$ and $\frac{1}{2}<b<b'<1$. Then, for all $0 <T\leq 1$, there is a constant $C=C(s,b)$ such that
\begin{equation}\label{eq:semigroup 1}
||\psi(t)W(t)f(x)||_{X^{\sigma,\,s,\, b}} \leq C||f||_{G^{\sigma, \, s}},
\end{equation}
and
\begin{equation}\label{eq:semigroup 2}
\left|\left| \psi_T(t) \int_{0}^{t}W(t-t')f(x,t')dt'\right|\right|_{X^{\sigma,\,s,\, b}} \leq CT^{b'-b}||f||_{X^{\sigma,\,s,\, b'-1}}.
\end{equation}
\end{lemma}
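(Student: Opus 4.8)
The plan is to reduce both estimates to the classical case $\sigma=0$ and then run the standard Fourier-analytic argument for the Airy group. The key observation is that the multiplier $e^{\sigma|D_x|}$ acts only in the spatial frequency variable $\xi$, so it commutes with the group $W(t)=e^{itD_x^3}$ and with the Duhamel operator $f\mapsto\int_0^t W(t-t')f\,dt'$. Hence, using \eqref{eq:exponential}, I would write
\[
\|\psi(t)W(t)f\|_{X^{\sigma,s,b}}=\|\psi(t)W(t)\,e^{\sigma|D_x|}f\|_{X^{s,b}},
\]
together with the analogous identity for the Duhamel term, while $\|e^{\sigma|D_x|}f\|_{G^{0,s}}=\|f\|_{G^{\sigma,s}}$ and $\|e^{\sigma|D_x|}f\|_{X^{s,b'-1}}=\|f\|_{X^{\sigma,s,b'-1}}$ follow straight from the definitions. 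Thus it suffices to establish \eqref{eq:semigroup 1} and \eqref{eq:semigroup 2} for $\sigma=0$, which is where all the work lies.

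For \eqref{eq:semigroup 1} with $\sigma=0$, I would compute the space-time Fourier transform of $\psi(t)W(t)f$ explicitly. Since the spatial transform of $W(t)f$ is $e^{it\xi^3}\hat f(\xi)$, taking the Fourier transform in time of the product with $\psi$ gives $\widetilde{\psi W f}(\xi,\tau)=c\,\hat f(\xi)\,\hat\psi(\tau-\xi^3)$. Inserting this into the $X^{s,b}$ norm and changing variables $\eta=\tau-\xi^3$ at fixed $\xi$ decouples the two integrals,
\[
\|\psi W f\|_{X^{s,b}}^2=\Big(\int\langle\xi\rangle^{2s}|\hat f(\xi)|^2\,d\xi\Big)\Big(\int\langle\eta\rangle^{2b}|\hat\psi(\eta)|^2\,d\eta\Big)=\|f\|_{G^{0,s}}^2\,\|\psi\|_{H^b}^2,
\]
and since $\psi$ is Schwartz the second factor is a finite constant depending only on $b$, which yields \eqref{eq:semigroup 1}.

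The genuinely technical estimate is \eqref{eq:semigroup 2}. Writing $f$ through its space-time transform and integrating the phase in $t'$, the Duhamel term reduces to the kernel
\[
e^{it\xi^3}\,\frac{e^{it(\tau-\xi^3)}-1}{i(\tau-\xi^3)}.
\]
I would split this according to the size of the modulation $\tau-\xi^3$. On the high-modulation region the denominator is harmless: the $e^{it\xi^3}$ piece is a free solution controlled by \eqref{eq:semigroup 1}, and the remaining piece multiplied by $\psi_T$ is estimated directly, the gain $T^{b'-b}$ arising when passing from the $b'-1$ weight to the $b$ weight. On the low-modulation region one cannot divide by $\tau-\xi^3$, so I would expand $e^{it(\tau-\xi^3)}-1=\sum_{n\ge1}\frac{(it(\tau-\xi^3))^n}{n!}$ to cancel the singularity; each resulting term carries a factor $t^n(\tau-\xi^3)^{n-1}$ which, after localization by $\psi_T$, concentrates the time transform at scale $T^{-1}$ and produces the required power of $T$, the series being summed by the constraint $\tfrac12<b<b'<1$. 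The careful bookkeeping of these two regions is exactly the computation recorded in \cite{bona2005algebraic} and \cite{ginibre1995probleme}, to which I would defer for the lengthy but routine details.

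The main obstacle is precisely this low-modulation analysis in \eqref{eq:semigroup 2}: near the resonant set $\tau=\xi^3$ the naive division is illegitimate, and extracting the positive power $T^{b'-b}$ while only paying the slightly stronger norm $X^{s,b'-1}$ is what forces the Taylor-expansion argument and the hypothesis $\tfrac12<b<b'<1$. By contrast, the reduction to $\sigma=0$ via \eqref{eq:exponential} and the homogeneous estimate \eqref{eq:semigroup 1} are short and self-contained.
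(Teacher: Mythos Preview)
Your proposal is correct and aligns with the paper's treatment: the paper does not give its own proof of this lemma but simply refers to \cite{bona2005algebraic} and \cite{ginibre1995probleme}, and your sketch---the reduction to $\sigma=0$ via \eqref{eq:exponential}, the direct Fourier computation for the free term, and the high/low-modulation split with Taylor expansion for the Duhamel term---is precisely the standard argument contained in those references. You have in fact supplied more detail than the paper itself.
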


Now, we derive the k-linear estimates that are key for proving the main results of this work. We start with the following estimate.

\begin{lemma} \label{lemma:estimates partial} Let $k \in \N$, $k \geq 4$ and $s>\frac{k-4}{2k}$. Then, for all $0 <\epsilon \ll 1$, we have
\begin{equation}\label{eq:estimate partial derivative}
||\partial_x(u_1 \cdots u_{k+1})||_{X^{s,-\frac{1}{2}+2\epsilon}} \leq C\prod_{i=1}^{k+1}||u_{i}||_{X^{s,\frac{1}{2}+\epsilon}},
\end{equation}
where $C>0$ depends on $s$, $k$ and $\epsilon$.
\end{lemma}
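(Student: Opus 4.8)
The plan is to prove \eqref{eq:estimate partial derivative} by duality, reducing it to a $(k+2)$-linear $L^2$-estimate and then distributing the single derivative together with the Sobolev weight onto the highest-frequency factor. Writing $M_i=\tau_i-\xi_i^3$ and $M=\tau-\xi^3$, I would test against $w$ with $\|w\|_{X^{-s,\,1/2-2\epsilon}}\le 1$ and, after taking absolute values on the Fourier side, bound
\[
\Big|\langle \partial_x(u_1\cdots u_{k+1}),\,w\rangle\Big|\lesssim \int_{*}|\xi|\,\prod_{i=1}^{k+1}|\tilde u_i(\xi_i,\tau_i)|\,|\tilde w(\xi,\tau)|,
\]
where $*$ denotes the convolution constraint $\xi=\sum\xi_i$, $\tau=\sum\tau_i$. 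On this set $|\xi|\le (k+1)\,|\xi_{i_0}|$ and $\langle\xi\rangle\lesssim\langle\xi_{i_0}\rangle$, where $i_0$ is an index realizing $\max_i|\xi_i|$; since $s\ge 0$ (because $s>\tfrac{k-4}{2k}\ge0$ for $k\ge4$) this lets me estimate $\langle\xi\rangle^{s}|\xi|\lesssim \langle\xi_{i_0}\rangle^{s+1}$ and cancel one power against the Sobolev weight of $u_{i_0}$. Splitting $\R^{k+1}$ into the $k+1$ regions according to which $|\xi_{i_0}|$ is largest, the net effect is that the derivative and the output Sobolev weight both fall on a single factor, leaving the remaining $k$ inputs carrying their full smoothing weight $\langle\xi_i\rangle^{-s}$ to spare.

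It then suffices to prove an $L^2$-based product estimate of the schematic form
\[
\Big|\iint (\langle D\rangle P_{i_0})\Big(\prod_{i\ne i_0}P_i\Big)\,R\,dx\,dt\Big|\lesssim \|P_{i_0}\|_{X^{0,\,1/2+\epsilon}}\,\prod_{i\ne i_0}\|u_i\|_{X^{s,\,1/2+\epsilon}}\,\|R\|_{X^{0,\,1/2-2\epsilon}},
\]
where $P_{i_0}$ corresponds to $u_{i_0}$ with its Sobolev weight removed, each $P_i=\langle D\rangle^{-s}Q_i$ ($i\ne i_0$) carries $s$ derivatives of smoothing, and $R$ is dual to $w$. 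I would close this by H\"older in $(x,t)$ together with the classical dispersive estimates for the Airy group, transferred to Bourgain spaces via the transfer principle (valid for modulation index above $\tfrac12$): the Kato local smoothing estimate $\|\langle D\rangle P_{i_0}\|_{L^\infty_xL^2_t}\lesssim\|P_{i_0}\|_{X^{0,\,1/2+\epsilon}}$, which absorbs the full derivative on the top factor; an admissible Airy--Strichartz norm $L^{q}_tL^{r}_x$ (with $\tfrac3q+\tfrac1r=\tfrac12$) for each middle factor $P_i$, whose $\langle D\rangle^{-s}$ smoothing is spent via the one-dimensional Sobolev embedding to raise the spatial integrability by $s$; and a further admissible Strichartz norm for $R$. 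A direct bookkeeping of the H\"older exponents (with the middle factors treated symmetrically and $\tfrac1\infty+\sum\tfrac1{a_i}+\tfrac1{a_R}=1$ in $x$, $\tfrac12+\sum\tfrac1{b_i}+\tfrac1{b_R}=1$ in $t$) reduces, after eliminating the free Strichartz parameter, to the single scalar identity $\tfrac2k=\tfrac12-s$; hence the configuration closes precisely when $s>\tfrac12-\tfrac2k=\tfrac{k-4}{2k}$, which is the scaling-critical threshold in the statement.

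The main obstacle is that the lone derivative must be recovered at exactly this critical regularity. Because the nonlinearity couples $k+1$ factors, the KdV-type resonance identity no longer factors to yield a derivative gain, so the derivative cannot be extracted algebraically and must instead be paid for by the smoothing estimate on the top-frequency factor; everything then hinges on distributing the remaining regularity so that the H\"older and Sobolev exponents are admissible, which is the content of the identity above. A second, more technical point is that the dual factor $R$ sits at modulation index $\tfrac12-2\epsilon<\tfrac12$, below the threshold at which the Strichartz transfer principle applies directly. I would handle this by interpolating $X^{0,\,1/2+\delta}\hookrightarrow L^{q}_tL^{r}_x$ against $X^{0,0}=L^2_{x,t}$, placing $R$ in an admissible Strichartz space with an arbitrarily small loss of integrability; this loss, together with the $\epsilon$ discrepancy between the input exponent $\tfrac12+\epsilon$ and the dual exponent $\tfrac12-2\epsilon$, is absorbed by the strictness of $s>\tfrac{k-4}{2k}$. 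Finally, the sum over the $k+1$ frequency-ordering regions is harmless, giving \eqref{eq:estimate partial derivative} with a constant depending on $k$, $s$ and $\epsilon$.
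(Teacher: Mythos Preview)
The paper does not give a self-contained argument here: it simply cites \cite{miao2009low} (for $k=4$) and \cite{farah2010supercritical} (for $k\ge5$), remarking that the passage from $\partial_x(u^{k+1})$ to $\partial_x(u_1\cdots u_{k+1})$ is a routine polarization via the Leibniz rule. Your sketch is therefore more, not less, than what the paper itself provides, and it follows the same circle of ideas that those references employ: duality, placing the derivative together with the output Sobolev weight on the highest-frequency factor, recovering that derivative by the Kato local smoothing estimate, and closing the remaining H\"older budget with Airy Strichartz estimates plus one-dimensional Sobolev embedding. The scaling count you give, reducing to the scalar condition $s>\tfrac12-\tfrac2k=\tfrac{k-4}{2k}$, is exactly the right heuristic and matches the threshold in the cited works.

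Two small points would need to be tightened in a full write-up. First, Kato smoothing reads $\|D_xP\|_{L^\infty_xL^2_t}\lesssim\|P\|_{X^{0,1/2+}}$ with the homogeneous symbol $|\xi|$, not $\langle\xi\rangle$; the inhomogeneous version you quote fails at low frequency. This is harmless once you split off the region $|\xi_{i_0}|\lesssim1$ (where all input frequencies, and hence $|\xi|$, are $O(1)$ and the derivative weight is bounded, so no smoothing is needed), but the split should be stated. Second, the H\"older and Strichartz admissibility constraints --- in particular $q\ge 2k$ for the symmetric middle factors so that $q_R\ge2$, and $r_R\ge2$ so that the interpolated embedding for the dual factor $R$ is available --- do need to be checked rather than asserted; they work out, but only after verifying that the Strichartz segment $(q,r)\in[8,\infty]\times[2,8]$ together with the Sobolev gain of $s$ derivatives actually covers the required exponents. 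With those two points made explicit, your plan yields a correct proof of the lemma along the lines of the references the paper invokes.
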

\begin{proof}For the proof, we refer \cite{miao2009low} for $k=4$ and \cite{farah2010supercritical} for $k\geq 5$. In both references, the authors prove the estimate \eqref{eq:estimate partial derivative} for the $k^{\text{th}}$ power of the function $u$, thas is, for $\partial_x(u^{k+1})$. This new case presented here can be proved with some obvious modifications using the Leibniz rule repeatedly.
\end{proof}

\begin{proposition}\label{prop: estimates} Let  $\sigma \geq 0$, $k \in \N$, $k \geq 4$ and $s>\frac{k-4}{2k}$ be given.Then, for all $0 <\epsilon \ll 1$, we have
\begin{equation}\label{eq: partial estimates}
||\partial_x(u_1 \cdots u_{k+1})||_{X^{\sigma, \, s,\, -\frac{1}{2}+2\epsilon}} \leq C\prod_{i=1}^{k+1}||u_{i}||_{X^{\sigma, \,s,\frac{1}{2}+2\epsilon}},
\end{equation}
where $C>0$ depends on $s$, $k$ and $\epsilon$.
\end{proposition}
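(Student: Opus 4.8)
The plan is to reduce the analytic estimate \eqref{eq: partial estimates} to the already-established classical case, Lemma \ref{lemma:estimates partial}, by transferring the exponential weight $e^{\sigma|\xi|}$ through the nonlinearity. First I would rewrite both sides in terms of the operator $e^{\sigma|D_x|}$ using the identity \eqref{eq:exponential}: the right-hand side of \eqref{eq: partial estimates} equals $C\prod_{i=1}^{k+1}||e^{\sigma|D_x|}u_i||_{X^{s,\frac12+2\epsilon}}$, while the left-hand side equals $||e^{\sigma|D_x|}\partial_x(u_1\cdots u_{k+1})||_{X^{s,-\frac12+2\epsilon}}$. Writing $v_i:=e^{\sigma|D_x|}u_i$, it therefore suffices to prove a classical ($\sigma=0$) bound for $e^{\sigma|D_x|}\partial_x(u_1\cdots u_{k+1})$ in terms of the $v_i$.

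The key step is an elementary frequency inequality. On the support of the $(k+1)$-fold convolution defining the product, the spatial frequency of $u_1\cdots u_{k+1}$ is $\xi=\xi_1+\cdots+\xi_{k+1}$, so the triangle inequality gives $|\xi|\le\sum_j|\xi_j|$ and hence $e^{\sigma|\xi|}\le\prod_j e^{\sigma|\xi_j|}$. Expressing the space-time Fourier transform of $\partial_x(u_1\cdots u_{k+1})$ as $i\xi$ times $\widetilde{u_1}*\cdots*\widetilde{u_{k+1}}$ and distributing the exponential weight onto each factor, I obtain the pointwise domination
\[
e^{\sigma|\xi|}\bigl|\widetilde{\partial_x(u_1\cdots u_{k+1})}(\xi,\tau)\bigr|\le|\xi|\,\bigl(|\widetilde{v_1}|*\cdots*|\widetilde{v_{k+1}}|\bigr)(\xi,\tau),
\]
where $|\widetilde{v_j}|=e^{\sigma|\xi_j|}|\widetilde{u_j}|$.

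I would then introduce auxiliary functions $V_j$ defined by $\widetilde{V_j}:=|\widetilde{v_j}|\ge0$. Since the weight $\langle\xi\rangle^s\langle\tau-\xi^3\rangle^b$ is nonnegative, the $X^{s,b}$ norm depends only on the modulus of the space-time Fourier transform and is monotone under its pointwise domination; moreover the right-hand side of the display above is precisely $|\widetilde{\partial_x(V_1\cdots V_{k+1})}|$, because $\widetilde{V_j}\ge0$ forces the convolution to be nonnegative. Consequently
\[
||e^{\sigma|D_x|}\partial_x(u_1\cdots u_{k+1})||_{X^{s,-\frac12+2\epsilon}}\le||\partial_x(V_1\cdots V_{k+1})||_{X^{s,-\frac12+2\epsilon}},
\]
and Lemma \ref{lemma:estimates partial} now applies directly to the $V_j$. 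The only remaining routine point is the mismatch in the time-regularity exponent: Lemma \ref{lemma:estimates partial} controls the product by $\prod_j||V_j||_{X^{s,\frac12+\epsilon}}$, whereas \eqref{eq: partial estimates} carries $\frac12+2\epsilon$; since $\langle\tau-\xi^3\rangle\ge1$ one has $||V_j||_{X^{s,\frac12+\epsilon}}\le||V_j||_{X^{s,\frac12+2\epsilon}}=||v_j||_{X^{s,\frac12+2\epsilon}}=||u_j||_{X^{\sigma,s,\frac12+2\epsilon}}$, using that $V_j$ and $v_j$ have the same Fourier modulus and again the identity \eqref{eq:exponential}. Chaining these inequalities yields \eqref{eq: partial estimates}.

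I do not expect a serious obstacle here: the entire argument is the transfer of the analytic weight across the nonlinearity via subadditivity, and the genuinely hard frequency analysis (the dispersive gain, the role of the threshold $s>\frac{k-4}{2k}$, and the number of factors) is completely absorbed into Lemma \ref{lemma:estimates partial}. The two things one must be careful to state cleanly are the monotonicity of the $X^{s,b}$ norm under pointwise domination of the Fourier modulus, and the validity of $e^{\sigma|\xi|}\le\prod_j e^{\sigma|\xi_j|}$ on the exact support of the convolution; beyond these, nothing more than the subadditivity trick is needed.
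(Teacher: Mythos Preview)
Your proposal is correct and follows essentially the same approach as the paper: distribute the exponential weight $e^{\sigma|\xi|}$ onto the factors via the subadditivity inequality $e^{\sigma|\xi|}\le\prod_j e^{\sigma|\xi_j|}$ on the convolution support, then invoke Lemma~\ref{lemma:estimates partial} for the shifted functions $e^{\sigma|D_x|}u_i$. Your write-up is in fact more detailed than the paper's (which only sketches the idea and cites \cite{barostichi2021modified}), and your handling of the $\frac12+\epsilon$ versus $\frac12+2\epsilon$ discrepancy via monotonicity is the right routine closure.
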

\begin{proof}
The proof is done by applying the inequality $e^{\sigma|\xi|}\leq e^{\sigma|\xi-\xi_{1}-\cdots-\xi_{k+1}|}e^{\sigma|\xi_1|}\cdots e^{\sigma|\xi_{k+1}|}$ and the estimate \eqref{eq:estimate partial derivative} for $e^{\sigma|D_x|}u_i$ in place of $u_i$ for $i=1, ..., k+1$. For a more detailed proof we refer \cite{barostichi2021modified}.
\end{proof}
 
We recall the following well known Strichartz type estimate from \cite{kenig1991osc} (see also Gr\"unrock \cite{grunrock2005bilinear})
\begin{equation}\label{eq:strichartz}
||u||_{L_t^8L_x^8} \leq C||u||_{X^{0,b}}, \text{ for all } b>\frac{1}{2}.
\end{equation}
Moreover, by Sobolev's embedding in the space and time variables, we have the estimate
\begin{equation}\label{eq:sobolev}
||u||_{L_t^\infty L_x^\infty} \leq C ||u||_{X^{\frac{1}{2}+,\frac{1}{2}+}},
\end{equation}
where $\frac{1}{2}+=\frac{1}{2}+\epsilon$.

The following result is immediate using \eqref{eq:strichartz}, \eqref{eq:sobolev} and the generalized Hölder inequality.
\begin{lemma} \label{lemma: L2 norm of product}
For $k \in \N$, $k \geq 4$ and $b >\frac{1}{2}$, we have
\begin{equation}
||u_1 \cdots u_{k+1}||_{L_t^2 L_x^2} \leq C \prod_{i=1}^{k-3}||u_i||_{X^{\frac{1}{2}+,b}}||u_{k-2}||_{X^{0,b}}\cdots ||u_{k+1}||_{X^{0,b}}.
\end{equation}
\end{lemma}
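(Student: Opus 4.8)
The plan is to distribute the $k+1$ factors between the two available end-point bounds and then combine them through the generalized Hölder inequality. Since the target norm satisfies $L_t^2 L_x^2 = L_{t,x}^2$, I would first apply Hölder on $\R^2$ to the product, placing the first $k-3$ factors $u_1, \ldots, u_{k-3}$ in $L_{t,x}^\infty$ and the remaining four factors $u_{k-2}, u_{k-1}, u_k, u_{k+1}$ in $L_{t,x}^8$; this gives $||u_1 \cdots u_{k+1}||_{L_t^2 L_x^2} \leq \prod_{i=1}^{k-3}||u_i||_{L_{t,x}^\infty}\prod_{i=k-2}^{k+1}||u_i||_{L_{t,x}^8}$. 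For $k \geq 4$ the $L^\infty$ group is nonempty, so the split is always admissible.

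The only arithmetic point that genuinely needs checking is that the Hölder exponents are compatible: the $k-3$ factors in $L^\infty$ contribute nothing to the sum of reciprocals, while the four factors in $L^8$ contribute $4 \cdot \tfrac{1}{8} = \tfrac{1}{2}$, so the reciprocals sum to exactly $\tfrac{1}{2}$, matching the $L^2$ target. This balance is precisely why the number of $L^8$ factors must be four, and hence why the first index $\tfrac{1}{2}+$ appears on exactly $k-3$ of the factors.

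It then remains to replace each $L^\infty$ and $L^8$ norm by the corresponding Bourgain norm. For the four $L_{t,x}^8$ factors I would invoke the Strichartz estimate \eqref{eq:strichartz} directly, giving $||u_i||_{L_{t,x}^8} \leq C ||u_i||_{X^{0,b}}$ for each $i \in \{k-2, \ldots, k+1\}$, which is valid since $b > \tfrac{1}{2}$. For the $k-3$ factors in $L_{t,x}^\infty$ I would use the Sobolev embedding \eqref{eq:sobolev} to get $||u_i||_{L_{t,x}^\infty} \leq C ||u_i||_{X^{\frac{1}{2}+, \frac{1}{2}+}}$, and then choose the parameter $\epsilon$ defining $\tfrac{1}{2}+ = \tfrac{1}{2} + \epsilon$ small enough that $\tfrac{1}{2} + \epsilon \leq b$; the elementary monotonicity embedding $X^{\frac{1}{2}+, b} \hookrightarrow X^{\frac{1}{2}+, \frac{1}{2}+}$ (raising the second index only increases the norm) then upgrades this to $||u_i||_{L_{t,x}^\infty} \leq C ||u_i||_{X^{\frac{1}{2}+, b}}$. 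Multiplying all of these bounds together yields exactly the claimed inequality. I do not anticipate any real obstacle here: the statement is an immediate consequence of the two embeddings and Hölder, and the only things to watch are the exponent bookkeeping and the compatibility $\tfrac{1}{2} + \epsilon \leq b$, both of which are automatic because $b > \tfrac{1}{2}$.
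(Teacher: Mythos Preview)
Your proposal is correct and follows exactly the route the paper indicates: apply the generalized H\"older inequality to place $k-3$ factors in $L_{t,x}^\infty$ and four factors in $L_{t,x}^8$, then invoke \eqref{eq:sobolev} and \eqref{eq:strichartz} respectively. The paper's own proof is just the one-line remark that the lemma is immediate from these two estimates and H\"older, so there is nothing to add.
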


\section{Local well-posedness - Proof of Theorem \ref{teo:local}}\label{sec:3}
In this section we use the estimates from the previous section and provide a proof for the local well-posedness result to the IVP \eqref{eq:gKdV} for given real analytic initial data.

\begin{proof}[Proof of Theorem \ref{teo:local}]
Let $\sigma>0$, $u_0 \in G^{\sigma, \, s}(\R)$ with $s>\frac{k-4}{2k}$. For $0<T\leq 1$, let $\psi_T$ be the cut-off function as defined earlier and consider a solution map given by
\begin{equation*}
\Phi_T(u)=\psi(t)W(t)u_0-\psi_T(t)\int_0^t W(t-t')\frac{\mu}{5}\partial_x(u^{k+1})dt'.
\end{equation*}
Our main goal is to show that there are $b>\frac{1}{2}$, $r>0$ and $T_0=T_0(||u_0||_{G^{\sigma,\,s}})$ such that $\Phi_{T_0}:B(r) \rightarrow B(r)$ is a contraction map, where
\begin{equation*}
B(r)=\left\lbrace u \in X_{T_0}^{\sigma, \, s, \, b}: ||u||_{X^{\sigma, \,s, \, b}}\leq r\right\rbrace.
\end{equation*}

For $u \in B(r)$, applying the linear inequalities \eqref{eq:semigroup 1} and \eqref{eq:semigroup 2} and the estimate \eqref{eq: partial estimates} with $b=\frac{1}{2}+\epsilon$ and $b'=\frac{1}{2}+2\epsilon$ as in the Proposition \ref{prop: estimates}, we obtain
\begin{equation}\label{eq: well defined}
\begin{split}
||\Phi_{T}(u)||_{X^{\sigma, \,s, \, b}} & \leq ||\psi(t)W(t)u_0||_{X^{\sigma, \,s, \, b}}+\left|\left|\psi_T(t)\int_0^t W(t-t')\frac{\mu}{5}\partial_x(u^{k+1})dt'\right|\right|_{X^{\sigma, \,s, \, b}}\\
&\leq C||u_0||_{G^{\sigma, \, s}}+CT^{\frac{1}{a}}||\partial_x(u^{k+1})||_{X^{\sigma, \,s, \, b'-1}} \\
&\leq C||u_0||_{G^{\sigma, \, s}}+CT^{\frac{1}{a}}||u||_{X^{\sigma, \,s, \, b}}^{k+1} \\
&\leq \frac{r}{2}+CT^{\frac{1}{a}}r^{k+1},
\end{split}
\end{equation}
where $\frac{1}{a}=b'-b$ and $r=2C||u_0||_{G^{\sigma, \, s}}$.

By choosing
\begin{equation}\label{eq: condition T0 1}
T_0 \leq \frac{1}{(2Cr^k)^a},
\end{equation}
one gets from \eqref{eq: well defined} that $||\Phi_{T_0}(u)||_{X^{\sigma, \,s, \, b}} \leq r$ for all $u \in B(r)$ showing that $\Phi_{T_0}(B(r))\subset B(r)$.

Now, for $u$, $v \in B(r)$, using \eqref{eq:semigroup 2} once again, we have
\begin{equation*}
||\Phi_{T_0}(u)-\Phi_{T_0}(v)||_{X^{\sigma, \,s, \, b}} \leq CT_{0}^{\frac{1}{a}}||\partial_x(u^{k+1})-\partial_x(v^{k+1})||_{X^{\sigma, \,s, \, b'}}.
\end{equation*}
Since
\begin{equation*}
u^{k+1}-v^{k+1}=(u-v)\sum_{i=0}^{k}u^{i}v^{k-i},
\end{equation*}
from \eqref{eq:estimate partial derivative} we conclude that
\begin{equation}\label{eq-3m}
\begin{split}
||\Phi_{T_0}(u)-\Phi_{T_0}(v)||_{X^{\sigma, \,s, \, b}} &\leq CT_{0}^{\frac{1}{a}}\left|\left|\partial_x\left((u-v)\sum_{i=0}^{k}u^{i}v^{k-i}\right)\right|\right|_{X^{\sigma, \,s, \, b'}}\\
&\leq CT_{0}^{\frac{1}{a}}||u-v||_{X^{\sigma, \,s, \, b}}\sum_{i=0}^{k}||u||_{X^{\sigma, \,s, \, b}}^{i}||v||_{X^{\sigma, \,s, \, b}}^{k-i}\\
&\leq CT_{0}^{\frac{1}{a}}(k+1)r^{k}||u-v||_{X^{\sigma, \,s, \, b}}
\end{split}
\end{equation}
By choosing 
\begin{equation}\label{eq: condition T0 2}
T_0 \leq \frac{1}{(2C(k+1)r^k)^a},
\end{equation}
it follows from \eqref{eq-3m} that
\begin{equation*}
||\Phi_{T_0}(u)-\Phi_{T_0}(v)||_{X^{\sigma, \,s, \, b}} \leq \frac{1}{2}||u-v||_{X^{\sigma, \,s, \, b}}
\end{equation*}
and $\Phi_{T_0}$ is a contraction map.

Finally, it is sufficient to choose $0<T_0\leq 1$ satisfying \eqref{eq: condition T0 1} and \eqref{eq: condition T0 2}. More precisely, considering
\begin{equation}\label{eq:T0}
T_0=\frac{c_0}{(1+||u_0||_{G^{\sigma, \, s}}^2)^{\frac{k}{2}a}}
\end{equation}  for an appropriate constant $c_0>0$ depending on $k$, $s$ and $b$, we conclude that $\Phi_{T_0}$ admits a unique fixed point which is a local solution of the IVP \eqref{eq:gKdV}. Moreover, the solution satisfies 
\begin{equation}\label{eq:bound for solution}
||u||_{X_{T_0}^{\sigma, \,s, \, b}}\leq r=C||u_0||_{G^{\sigma, \, s}}.
\end{equation}

The continuous dependence on the initial data follows by a similar argument and the proof is complete.
\end{proof}

\section{Almost conserved quantity}\label{sec:4}
In order to apply the local result repeatedly to cover arbitrary time interval $[-T, \, T]$, for any $T>0$, we will introduce almost conserved quantity associated to the gKdV equation  \eqref{eq:gKdV} and find some estimates for it. Taking in consideration the conserved quantities \eqref{eq:mass conversation} and \eqref{eq:energy conservation}, we define
\begin{equation}\label{eq:almost conservated}
E_\sigma (t)= \int (e^{\sigma |D_x|}u)^2 dx + \int (\partial_x (e^{\sigma |D_x|}u))^2dx -  \frac{2\mu}{(k+1)(k+2)}\int (e^{\sigma |D_x|}u)^{k+2}dx.
\end{equation}

Note that, for $\sigma=0$, \eqref{eq:almost conservated} turns out to be the conserved quantities \eqref{eq:mass conversation} and \eqref{eq:energy conservation}. However, for $\sigma>0$ this fails to hold and our idea is to find a decay  estimate for $E_\sigma (t)$.   For this purpose,  let $U=e^{\sigma|D_x|}u$ and start by differentiating $E_\sigma (t)$ with respect to $t$, to obtain 

\begin{equation}\label{eq:partial E}
        \frac{d}{dt}(E_\sigma(t))=2\int U U_tdx+2\int \partial_x U \partial_x(\partial_t U) dx - \frac{2\mu}{k+1}\int U^{k+1}\partial_t U dx.
    \end{equation}
    Applying the operator $e^{\sigma|D_x|}$ to the gKdV equation \eqref{eq:gKdV} equation, we get
    \begin{equation}\label{eq:op in gkdv}
        \partial_t U +\partial_x^3 U+\mu U^k\partial_x U=F(U),
        \end{equation}
        where $F(U)$ is given by
\begin{equation}\label{eq:F}
F(U)=\frac{\mu }{k+1}\partial_x\left[U^{k+1}-e^{\sigma |D_x|}\big((e^{-\sigma |D_x|}U)^{k+1}\big)\right].
\end{equation}       
 Using \eqref{eq:op in gkdv} in each term of \eqref{eq:partial E}, one has
\begin{equation*}
    \begin{split}
            \int U\partial_tUdx&=-\int U\partial_x^3Udx-\frac{\mu}{k+2}\partial_x(U^{k+2})dx+\int UF(U)dx,\\
            \int \partial_xU \partial_x(\partial_tU)dx&=-\int\partial_xU\partial_x^4Udx-\mu\int \partial_xU\partial_x(U^k\partial_xU)dx+\int \partial_xU\partial_x(F(U))dx,\\
            \int U^{k+1}\partial_t Udx&=-\int U^{k+1}\partial_x^3Udx-\mu\int U^{k+1}\partial_xUdx+\int U^{k+1}F(U)dx.
    \end{split}
\end{equation*}
We can assume that $U$ and all its partial derivatives tend to zero as $|x| \rightarrow \infty$ (see \cite{selberg2015lower} for a detailed argument) and it follows from integration by parts that
\begin{align}
        \int U\partial_tUdx&=\int UF(U)dx, \label{eq:1a}\\
        \int \partial_xU \partial_x(\partial_tU)dx &=-\frac{\mu}{k+1}\int U^{k+1}\partial_x^3U dx+\int \partial_xU\partial_x(F(U))dx,\label{eq:1b}\\
        \int U^{k+1}\partial_t Udx&=-\int U^{k+1}\partial_x^3Udx+\int U^{k+1}F(U)dx \label{eq:1c}.
\end{align}
Now, inserting \eqref{eq:1a}, \eqref{eq:1b} and \eqref{eq:1c} in \eqref{eq:partial E}, we get
\begin{equation}\label{eq:partial E new}
    \frac{d}{dt}E_\sigma(t)=2\int UF(U)dx + 2\int \partial_xU\partial_x(F(U))dx - \frac{2\mu}{k+1}\int U^{k+1}F(U)dx.
\end{equation}
Integrating \eqref{eq:partial E new} in time over $[0, t']$ for $0<t'\leq T$, we obtain
\begin{equation}\label{eq:estimate E}
    E_\sigma(t') =E_\sigma(0)+R_\sigma(t'),
\end{equation}
where
\begin{equation}\label{eq:R}
\begin{split}
    R_\sigma(t')=& \,\, 2\int \int \chi_{[0,\,t']}UF(U)dxdt + 2\int\int \chi_{[0,\,t']}\partial_xU\partial_x(F(U))dxdt \,\, -\\
    & - \frac{2\mu}{k+1}\int\int \chi_{[0,\,t']}U^{k+1}F(U)dxdt.
    \end{split} 
\end{equation}

In sequel, we find estimates for $F(U)$ in the Bourgain's space norm.

\begin{lemma}\label{lemma: estimates F}
Let $F$ be as defined in \eqref{eq:F} and $\sigma >0$. Then, for any $b=\frac{1}{2}+\epsilon$, $0<\epsilon \ll 1$, and for all $\alpha \in \left[0,\frac{k+4}{2k}\right)$,
\begin{equation}\label{eq: estimate F in L2}
||F(U)||_{L_x^2 L_t^2} \leq C \sigma^{\alpha} ||U||_{X^{1, \, b}}^{k+1},
\end{equation}
\begin{equation}\label{eq: estimate F in X}
||\partial_x(F(U))||_{X^{0, \, b-1}} \leq C \sigma^{\alpha} ||U||_{X^{1, \, b}}^{k+1}
\end{equation}
for some constant $C>0$ independent on $\sigma$.
\end{lemma}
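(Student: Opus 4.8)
<br>

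The key object is the difference operator appearing in
\[
F(U)=\frac{\mu}{k+1}\,\partial_x\!\left[U^{k+1}-e^{\sigma|D_x|}\big((e^{-\sigma|D_x|}U)^{k+1}\big)\right].
\]
Writing $U=e^{\sigma|D_x|}u$, the factor inside the brackets is, on the Fourier side, a convolution weighted by
\[
1-\exp\!\Big(\sigma|\xi|-\sigma|\xi_1|-\cdots-\sigma|\xi_{k+1}|\Big),
\qquad \xi=\xi_1+\cdots+\xi_{k+1}.
\]
Since $|\xi|\le|\xi_1|+\cdots+|\xi_{k+1}|$, the exponent is $\le 0$, so the symbol is bounded by $1$ and, crucially, vanishes when all frequencies have the same sign. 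The plan is to quantify this cancellation: on the region where cancellation occurs the gain must be extracted in powers of $\sigma$ times a loss of derivatives that can be absorbed by the Bourgain norm at regularity $s=1$.

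\textbf{Step 1 (symbol bound).} First I would prove the pointwise inequality
\[
0\le 1-\exp\!\big(\sigma(|\xi|-|\xi_1|-\cdots-|\xi_{k+1}|)\big)\le C\,\sigma^{\alpha}\,\big(|\xi_1|+\cdots+|\xi_{k+1}|-|\xi|\big)^{\alpha}
\]
for every $\alpha\in[0,1]$, using the elementary estimate $1-e^{-x}\le x^{\alpha}$ for $x\ge 0$, $\alpha\in[0,1]$. The quantity $\Lambda:=|\xi_1|+\cdots+|\xi_{k+1}|-|\xi|\ge 0$ is the total ``frequency interaction'' and is nonzero only when at least two of the $\xi_i$ have opposite signs; in that case $\Lambda$ is comparable to $2\min_{\text{pair}}\{\,\cdots\}$ and is controlled by the smallest few frequencies. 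The point is that $\Lambda^{\alpha}$ can be redistributed as $\prod|\xi_i|^{\theta_i}$ with $\sum\theta_i=\alpha$, each $\theta_i\le 1$; this is where the threshold $\alpha<\frac{k+4}{2k}$ enters, as I explain below.

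\textbf{Step 2 (reduction to a multilinear estimate).} After inserting the bound from Step~1, $F(U)$ is dominated by $\sigma^{\alpha}$ times a $(k+1)$-linear form in the $U_i:=U$ whose symbol carries a total of $\alpha$ extra derivatives (beyond the $\partial_x$ already present) spread over the factors, and a loss coming from the $e^{\pm\sigma|D_x|}$ having been cancelled. Concretely I would write $F(U)$ schematically as $\sigma^\alpha\,\partial_x\big(\langle D\rangle^{\theta_1}U\cdots\langle D\rangle^{\theta_{k+1}}U\big)$ with $\sum\theta_i=\alpha$, and then estimate the resulting product. For \eqref{eq: estimate F in L2} I would use Lemma~\ref{lemma: L2 norm of product}, placing the three factors carrying the top derivative order into the $X^{0,b}$ slots and the remaining $k-2$ factors into the $X^{\frac12+,b}$ slots; at regularity $s=1$ each factor has one derivative to spend, and the budget must cover the single $\partial_x$ from the divergence form, the $\alpha$ derivatives from $\Lambda^{\alpha}$, and the $\tfrac12+$ losses demanded by Sobolev embedding \eqref{eq:sobolev}. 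Matching these against the available $(k+1)$ units of regularity yields precisely the constraint $\alpha<\frac{k+4}{2k}$. For \eqref{eq: estimate F in X} I would instead invoke the dual form of Proposition~\ref{prop: estimates}: since $\partial_x(F(U))$ already has the divergence structure $\partial_x\partial_x(\cdots)$, one of the derivatives is absorbed into the $X^{0,b-1}$ negative regularity, and the $k$-linear estimate \eqref{eq:estimate partial derivative} (with $\sigma=0$, since the exponential weights have been stripped by the symbol bound) closes the estimate after the $\alpha$ derivatives are distributed as above.

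\textbf{Main obstacle.} The delicate point is Step~1--2 bookkeeping: one must verify that the $\alpha$ derivatives produced by $\Lambda^{\alpha}$ can always be allocated to frequencies that the multilinear estimate can actually absorb, i.e. that $\Lambda^{\alpha}$ never forces more than one derivative onto a single low factor in a way that breaks \eqref{eq:estimate partial derivative}. Because $\Lambda$ is controlled by the two smallest frequencies (the ones responsible for the sign cancellation), the natural split is $\theta_i\le 1$ per factor, and the sharp endpoint $\alpha=\frac{k+4}{2k}$ is exactly the value at which the total derivative count $1+\alpha+(k-2)\cdot 0+\cdots$ saturates the $k+1$ available units at $s=1$ together with the Strichartz/Sobolev losses; keeping $\alpha$ strictly below this threshold gives the needed room. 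I would present the frequency decomposition carefully (separating the region $\Lambda\sim 0$, where the symbol is essentially zero and nothing is lost, from the genuinely interacting region) and then invoke the already-proved linear estimates, so that no new hard harmonic analysis is required beyond the symbol inequality of Step~1.
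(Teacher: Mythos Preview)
Your overall strategy---extract $\sigma^\alpha$ from the symbol via $1-e^{-x}\le x^\alpha$, then feed the remaining multilinear form into Lemma~\ref{lemma: L2 norm of product} and Lemma~\ref{lemma:estimates partial}---is exactly the paper's approach. Two points of bookkeeping are off, however, and one of them matters.

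First, $\Lambda=\sum|\xi_i|-|\xi|$ is \emph{not} controlled by the two smallest frequencies; ordering $|\xi_1|\le\cdots\le|\xi_{k+1}|$ one has $\Lambda\le 2(|\xi_1|+\cdots+|\xi_k|)\le 2k|\xi_k|$, so the paper places all $\alpha$ derivatives on the \emph{second-largest} factor rather than distributing them. For \eqref{eq: estimate F in L2} this is harmless: with $|\xi|\le(k+1)|\xi_{k+1}|$ one lands on $\|w_1\cdots w_{k+1}\|_{L^2_{x,t}}$ where only $w_k=D_x^{\alpha}U$ and $w_{k+1}=D_x U$ carry derivatives, and Lemma~\ref{lemma: L2 norm of product} closes as soon as $\alpha\le 1$. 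So your claim that the threshold $\alpha<\frac{k+4}{2k}$ arises from the $L^2$ estimate is incorrect; that estimate is the easy one.

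Second, and this is the real gap, your sketch of \eqref{eq: estimate F in X} does not work as written. The space $X^{0,b-1}$ has negative \emph{temporal} weight, which does not by itself absorb a spatial derivative, and Lemma~\ref{lemma:estimates partial} is only available for $s>\frac{k-4}{2k}$, not at $s=0$. The paper's device is to insert $|\xi|\le\langle\xi\rangle^{j}|\xi|^{1-j}$ for some $j>\frac{k-4}{2k}$, push the factor $|\xi|^{1-j}$ onto the largest frequency via $|\xi|^{1-j}\le(k+1)^{1-j}|\xi_{k+1}|^{1-j}$, and then apply \eqref{eq:estimate partial derivative} at regularity $s=j$. This produces factors $\|U\|_{X^{j,b}}$, $\|D_x^{\alpha}U\|_{X^{j,b}}$ and $\|D_x^{1-j}U\|_{X^{j,b}}=\|U\|_{X^{1,b}}$, and closing requires $j+\alpha\le 1$ together with $j>\frac{k-4}{2k}$; optimizing gives exactly $\alpha<\frac{k+4}{2k}$. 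So the threshold comes from \eqref{eq: estimate F in X}, and the missing idea in your outline is this intermediate-regularity trick.
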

\begin{proof}
Observe that
\begin{equation}\label{eq: transf F}
|\widehat{F(U)}(\xi, \tau)| \leq C|\xi|\int_{*} \big(1-e^{-\sigma(|\xi_1|+\cdots+|\xi_{k+1}|-|\xi|)}\big) |\widehat{U}(\xi_1,\tau_1)| \cdots |\widehat{U}(\xi_{k+1},\tau_{k+1})|,
\end{equation}
where $*$ denotes the integral over the set $\xi=\xi_1 + \cdots +\xi_{k+1}$ and $\tau=\tau_1 + \cdots +\tau_{k+1}$.
 
Now, from the inequality
\begin{equation*}
    e^x-1\leq x^{\alpha}e^x, \quad \forall \, x\geq 0 \text{ and } \alpha \in [0,\, 1],
\end{equation*}
we get
\begin{equation}\label{eq: 1-e}
    1-e^{-\sigma(|\xi_1|+\cdots+|\xi_{k+1}|-|\xi|)} \leq \sigma^{\alpha}(|\xi_1|+\cdots+|\xi_{k+1}|-|\xi|)^{\alpha}.
\end{equation}
Without loss of generality, we can assume that $|\xi_1| \leq \cdots \leq |\xi_{k+1}|$. A simple calculation shows
\begin{equation*}
    |\xi_1|+\cdots+|\xi_{k+1}|-|\xi| \leq 2k|\xi_{k}|,
\end{equation*}
and consequently, the estimate \eqref{eq: 1-e} yields
\begin{equation}\label{eq: 1-e nova}
     1-e^{-\sigma(|\xi_1|+\cdots+|\xi_{k+1}|-|\xi|)} \leq C\sigma^{\alpha}|\xi_{k}|^{\alpha}.
\end{equation}
Using \eqref{eq: 1-e nova} in \eqref{eq: transf F}, we have
\begin{equation}\label{eq: tranf F nova}
|\widehat{F(U)}(\xi, \tau)| \leq C\sigma^{\alpha}|\xi|\int_{*} |\xi_k|^{\alpha}|\widehat{U}(\xi_1,\tau_1)| \cdots |\widehat{U}(\xi_{k+1},\tau_{k+1})|.
\end{equation}
Moreover, one has
\begin{equation}\label{eq:4.19}
    |\xi||\xi_k|^{\alpha}\leq (k+1)|\xi_{k+1}||\xi_k|^{\alpha}.
\end{equation}
Now, using \eqref{eq:4.19} in \eqref{eq: tranf F nova} and an use of Plancherel's identity implies that
\begin{equation}\label{eq-420}
\begin{split}
  \!\!\!\!\!\!\!  ||F(U)||_{L_x^2 L_t^2}&\leq C \sigma^{\alpha} \left|\left| \int_{*} |\widehat{U}(\xi_1,\tau_1)| \cdots |\widehat{U}(\xi_{k-1},\tau_{k-1})||\widehat{D_x^{\alpha} U}(\xi_{k},\tau_{k})||\widehat{D_x U}(\xi_{k+1},\tau_{k+1})|\right|\right|_{L_{\xi}^2 L_{\tau}^2}\\
    &=C\sigma^{\alpha} ||w_1 \cdots w_{k+1}||_{L_x^2 L_t^2},
\end{split}
\end{equation}
where $\widehat{w_1}(\xi, \, \tau)=\cdots=\widehat{w_{k-1}}(\xi, \, \tau)=|\widehat{U}(\xi,\tau)|$, $\widehat{w_{k}}(\xi, \, \tau)=|\widehat{D_x^{\alpha}U}(\xi,\tau)|$ and $\widehat{w_{k+1}}(\xi, \, \tau)=|\widehat{D_xU}(\xi,\tau)|$. 

By applying Lemma \ref{lemma: L2 norm of product}  it follows from \eqref{eq-420} that
\begin{equation*}
\begin{split}
    ||F(U)||_{L_x^2 L_t^2} &\leq C{\sigma}^{\alpha} \prod_{i=1}^{k-3}||w_i||_{X^{\frac{1}{2}+,b}}||w_{k-2}||_{X^{0,b}}\cdots ||w_{k+1}||_{X^{0,b}}\\
    &=C{\sigma}^{\alpha} \left( \prod_{i=1}^{k-3}||U||_{X^{\frac{1}{2}+,b}}\right)||U||^2_{X^{0,b}}||D_x^{\alpha}U||_{X^{0,b}}||D_xU||_{X^{0,b}}\\
    &\leq C{\sigma}^{\alpha} ||U||_{X^{1,b}}^{k+1}.
\end{split}
\end{equation*}
This completes the proof of \eqref{eq: estimate F in L2}.

To prove \eqref{eq: estimate F in X}, first observe that for $0 \leq j \leq 1$, one has $\langle \xi \rangle^{-j} \leq |\xi|^{-j}$ for all $\xi \neq 0$. Using this fact, we obtain
\begin{equation}\label{eq: partial F}
\begin{split}
||\partial_xF(U)||_{X^{0,\, b-1}} &=||\langle \tau - \xi^3\rangle^{b-1} |\xi| |\widehat{F(U)}(\xi,\tau)| ||_{L_{\xi}^2 L_{\tau}^2}\\
&\leq ||\langle \tau - \xi^3 \rangle^{b-1}\langle\xi \rangle^j |\xi|^{1-j}|\widehat{F(U)}(\xi,\tau)|||_{L_{\xi}^2 L_{\tau}^2}.
\end{split}
\end{equation}
Assuming $|\xi_1|\leq \cdots \leq |\xi_{k+1}|$, one has $|\xi|^{1-j} \leq (k+1)^{1-j}|\xi_{k+1}|^{1-j}$ and using  \eqref{eq: tranf F nova}, the estimate \eqref{eq: partial F} yields
\begin{equation}\label{eq:4.23}
    \begin{split}
||\partial_xF(U)||_{X^{0,\, b-1}}  &\leq C\sigma^{\alpha}  \left|\left| \langle \tau - \xi^3 \rangle^{b-1}\langle\xi \rangle^j |\xi|^{1-j}  |\xi|\int_{*} |\xi_{k}|^{\alpha}|\widehat{U}(\xi_1,\tau_1)| \cdots |\widehat{U}(\xi_{k+1},\tau_{k+1})|\right|\right|_{L_{\xi}^2 L_{\tau}^2}\\
&\leq C\sigma^{\alpha}  \left|\left| \langle \tau - \xi^3 \rangle^{b-1}\langle\xi \rangle^j  |\xi|\int_{*} |\xi_{k}|^{\alpha}|\xi_{k+1}|^{1-j}|\widehat{U}(\xi_1,\tau_1)| \cdots |\widehat{U}(\xi_{k+1},\tau_{k+1})|\right|\right|_{L_{\xi}^2 L_{\tau}^2}\\
&=C\sigma^{\alpha} ||\partial_x(v_1 \cdots v_{k+1})||_{X^{j, \, b-1}},
\end{split}
\end{equation}
for $v_1, ..., v_{k+1}$ defined by $\widehat{v_1}(\xi, \, \tau)=\cdots=\widehat{v_{k-1}}(\xi, \, \tau)=|\widehat{U}(\xi,\tau)|$, $\widehat{v_{k}}(\xi, \, \tau)=|\widehat{D_x^{\alpha}U}(\xi,\tau)|$ and $\widehat{v_{k+1}}(\xi, \, \tau)=|\widehat{D_x^{1-j}U}(\xi,\tau)|$.

For every $j>\frac{k-4}{2k}$, we can use the estimate \eqref{eq: partial estimates} with $b=\frac{1}{2}+\epsilon$, to obtain from \eqref{eq:4.23} that
\begin{equation}\label{eq: partial F final}
\begin{split}
    ||\partial_xF(U)||_{X^{0,\, b-1}} &\leq C\sigma^{\alpha}||v_1||_{X^{j,\,b}}\cdots ||v_{k+1}||_{X^{j,\,b}}\\    &=C\sigma^{\alpha}\left(\prod_{i=1}^{k-1}||U||_{X^{j,\,b}}\right) ||D_x^{\alpha}U||_{X^{j,\,b}}||D_x^{1-j}U||_{X^{j,\,b}}\\  &\leq C\sigma^{\alpha}\left(\prod_{i=1}^{k-1}||U||_{X^{1,\,b}}\right) ||U||_{X^{j+\alpha,\,b}}||U||_{X^{1,\,b}}.\\
    \end{split}
\end{equation}
Now, we choose $j>\frac{k-4}{2k}$ and $\alpha \in [0,1]$ such that $j+\alpha\leq 1$. Indeed, for $\delta>0$ arbitrarialy small, considering $j=\frac{k-4}{2k}+\delta$ we get $j+\alpha\leq 1$ if we choose $\alpha \leq \frac{k+4}{2k}- \delta$. Therefore, for $0<\alpha \leq \frac{k+4}{2k}- \delta$, from \eqref{eq: partial F final} one has
\begin{equation*}
    ||\partial_xF(U)||_{X^{0,\, b-1}} \leq C\sigma^{\alpha} ||U||_{X^{1,b}}^{k+1},
\end{equation*}
as desired.
\end{proof}

In the following we use the estimate obtained in Lemma \ref{lemma: estimates F} to prove that the quantity $E_{\sigma}(t)$ defined in \eqref{eq:almost conservated} is almost conserved.

\begin{proposition}\label{prop:conserved quantity}
    Let $\sigma>0$ and $\alpha \in [0, \, \frac{k+4}{2k})$. Then for any $b=\frac{1}{2}+\epsilon$, $0<\epsilon \ll 1$, there exists $C>0$ such that for any solution $u \in X_T^{\sigma, \,1, \, b}$ to the IVP \eqref{eq:gKdV} in the interval $[0,\,T]$, we have
\begin{equation}\label{eq:conserved quantity}
    \sup_{t\in [0,\,T]} E_\sigma (t) \leq E_\sigma (0) + C\sigma^{\alpha}||u||_{X_T^{\sigma, \,1, \, b}}^{k+2}(1+||u||_{X_T^{\sigma, \,1, \, b}}^{k}).
\end{equation}
\end{proposition}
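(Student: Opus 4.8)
The plan is to start from the exact identity \eqref{eq:estimate E}, namely $E_\sigma(t') = E_\sigma(0) + R_\sigma(t')$, so that the proposition reduces to the uniform bound $\sup_{t' \in [0,T]}|R_\sigma(t')| \le C\sigma^\alpha \|u\|_{X_T^{\sigma,1,b}}^{k+2}(1 + \|u\|_{X_T^{\sigma,1,b}}^k)$. The three summands of $R_\sigma(t')$ in \eqref{eq:R} will be estimated separately, the common mechanism being to pair each one against the factor $F(U)$ (or $\partial_x F(U)$) and to invoke the gain $\sigma^\alpha$ already extracted in Lemma \ref{lemma: estimates F}. Throughout I write $U = e^{\sigma|D_x|}u$ and use $\|U\|_{X^{s,c}} = \|u\|_{X^{\sigma,s,c}}$ from \eqref{eq:exponential}; since each multiplier defining $F$ in \eqref{eq:F} acts only in $x$, the map $U \mapsto F(U)$ is local in time, so the integrals in $R_\sigma(t')$ are unchanged if $U$ is replaced by $e^{\sigma|D_x|}\tilde u$ for any extension $\tilde u \in X^{\sigma,1,b}(\R^2)$ of $u$. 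This is what lets me convert the global $X^{\sigma,1,b}$ norms appearing below into the restriction norm $\|u\|_{X_T^{\sigma,1,b}}$ by taking an infimum over extensions.

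For the first and third terms I would use Cauchy--Schwarz in $L^2_{x,t}$. Writing $\chi = \chi_{[0,t']}$ and using $\chi^2 = \chi$, the first term is at most $2\|U\|_{L^2_{x,t}}\|F(U)\|_{L^2_{x,t}}$; bounding $\|U\|_{L^2_{x,t}} = \|U\|_{X^{0,0}} \le \|U\|_{X^{1,b}}$ and applying \eqref{eq: estimate F in L2} gives $C\sigma^\alpha\|U\|_{X^{1,b}}^{k+2}$. The third term is at most $\tfrac{2}{k+1}\|U^{k+1}\|_{L^2_{x,t}}\|F(U)\|_{L^2_{x,t}}$, and here I would bound $\|U^{k+1}\|_{L^2_{x,t}}$ by Lemma \ref{lemma: L2 norm of product} (taking every factor equal to $U$ and using that $X^{1,b}$ embeds into both $X^{\frac12+,b}$ and $X^{0,b}$), which yields $C\|U\|_{X^{1,b}}^{k+1}$; together with \eqref{eq: estimate F in L2} this produces $C\sigma^\alpha\|U\|_{X^{1,b}}^{2k+2}$, the top-order term.

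The second term is the only one needing care, since it carries a derivative on each factor. Here I would use the $L^2_{x,t}$ duality between $X^{0,1-b}$ and $X^{0,b-1}$ to obtain the pairing bound $2\|\chi\,\partial_x U\|_{X^{0,1-b}}\|\partial_x F(U)\|_{X^{0,b-1}}$, controlling the second factor directly by \eqref{eq: estimate F in X}. For the first factor the sharp cutoff $\chi = \chi_{[0,t']}$ cannot simply be discarded, so I would apply Lemma \ref{lemma:restriction} with the interval $I = [0,t'] \subset [-T,T]$ and exponent $1-b = \tfrac12 - \epsilon$, which lies in $(-\tfrac12,\tfrac12)$ exactly as that lemma requires; this gives $\|\chi\,\partial_x U\|_{X^{0,1-b}} \le C\|\partial_x U\|_{X_T^{0,1-b}}$. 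Since $b > \tfrac12$ we have $1-b \le b$, whence $\|\partial_x U\|_{X_T^{0,1-b}} \le \|U\|_{X_T^{1,1-b}} \le \|U\|_{X_T^{1,b}}$, and the second term is bounded by $C\sigma^\alpha\|U\|_{X^{1,b}}^{k+2}$.

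Collecting the three contributions gives $|R_\sigma(t')| \le C\sigma^\alpha\big(\|U\|_{X^{1,b}}^{k+2} + \|U\|_{X^{1,b}}^{2k+2}\big)$ with a constant independent of $t'$; factoring $\|U\|_{X^{1,b}}^{k+2}(1 + \|U\|_{X^{1,b}}^k)$, rewriting $\|U\|_{X^{1,b}} = \|\tilde u\|_{X^{\sigma,1,b}}$, and minimizing over extensions $\tilde u$ yields the claimed estimate once it is inserted into \eqref{eq:estimate E} and the supremum over $t'$ is taken. I expect no genuine obstacle here: the essential difficulty, namely extracting the factor $\sigma^\alpha$ from the analyticity defect $1 - e^{-\sigma(|\xi_1|+\cdots+|\xi_{k+1}|-|\xi|)}$, has already been absorbed into Lemma \ref{lemma: estimates F}, and what remains is the bookkeeping above. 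The two points to watch are the duality-plus-cutoff handling of the derivative term (ensuring $1-b$ falls in the admissible range of Lemma \ref{lemma:restriction}) and the time-locality of $F$ that legitimizes passing to the restriction norm.
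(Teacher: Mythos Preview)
Your proposal is correct and follows essentially the same route as the paper: start from \eqref{eq:estimate E}, bound each of the three terms of $R_\sigma(t')$ via Cauchy--Schwarz (using $X^{0,1-b}$/$X^{0,b-1}$ duality for the derivative term), invoke Lemma~\ref{lemma: estimates F} for the $\sigma^\alpha$ gain, and handle the cutoff $\chi_{[0,t']}$ through Lemma~\ref{lemma:restriction}. Your extension/infimum bookkeeping and the use of Lemma~\ref{lemma: L2 norm of product} for $\|U^{k+1}\|_{L^2_{x,t}}$ match the paper's argument exactly (the paper cites Lemma~\ref{lemma:estimates partial} there, but what is actually used is Lemma~\ref{lemma: L2 norm of product}, as you identified).
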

\begin{proof}
   Recalling \eqref{eq:estimate E} and \eqref{eq:R}, we have to estimate each term of $|R_\sigma(t')|$ for all $0<t'\leq T$. For the first and the third terms in \eqref{eq:R} we use the Cauchy-Schwarz inequality, Lemmas \ref{lemma:restriction} and \ref{lemma:estimates partial} and the estimate \eqref{eq: estimate F in L2} restricted to the time slab and obtain that for any $b=\frac{1}{2}+\epsilon$, $0<\epsilon \ll 1$, there exists $C>0$ such that
\begin{equation}\label{eq:estimate first term}
    \begin{split}
    \left|\int \int \chi_{[0,\,t']}UF(U)dxdt\right| &\leq ||\chi_{[0,\,t']}U||_{L_x^2L_t^2}||\chi_{[0,\,t']}F(U)||_{L_x^2L_t^2}\\
    &\leq ||U||_{X_T^{0,\,0}}||F(U)||_{X_T^{0,\,0}}\\
    &\leq C\sigma^{\alpha}||u||_{X_T^{ \sigma, \, 1, \, b}}^{k+2}
    \end{split}
\end{equation}
and 
\begin{equation}\label{eq:estimate third term}
    \begin{split}
    \left|\int \int \chi_{[0,\,t']}U^{k+1}F(U)dxdt\right| &\leq ||\chi_{[0,\,t']}U^{k+1}||_{L_x^2L_t^2}||\chi_{[0,\,t']}F(U)||_{L_x^2L_t^2}\\
    &\leq ||U^{k+1}||_{X_T^{0,\,0}}||F(U)||_{X_T^{0,\,0}}\\
    &\leq C\sigma^{\alpha}||u||_{X_T^{ \sigma, \, 1, \, b}}^{2(k+1)},
    \end{split}
\end{equation}
for all $0<t'\leq T$.

For the second term in \eqref{eq:R}, we use the Cauchy-Schwarz inequality, Lemma \ref{lemma:restriction} and estimate \eqref{eq: estimate F in X}, to obtain
\begin{equation}\label{eq:estimate second term}
    \begin{split}
    \left|\int \int \chi_{[0,\,t']}\partial_xU\partial_x(F(U))dxdt\right|& \leq ||\chi_{[0,\,t']}\partial_xU||_{X^{0, \, 1-b}}||\chi_{[0,\,t']}\partial_x(F(U))||_{X^{0, \, b-1}}\\
    & \leq ||\partial_xU||_{X_T^{0, \, 1-b}}||\partial_x(F(U))||_{X_T^{0, \, b-1}}\\
    &\leq C\sigma^{\alpha}||u||_{X_T^{ \sigma, \, 1, \, b}}^{k+2},
    \end{split}
\end{equation}
 for all $0<t'\leq T$, where $\frac{1}{2}<b<1$ is the same as before.

 The result follows combining the estimates \eqref{eq:estimate first term}, \eqref{eq:estimate third term} and \eqref{eq:estimate second term}.
\end{proof}

\begin{corollary}\label{cor: almost conserved quantity}
Let $\sigma>0$, $k$ even and $\alpha \in [0,\frac{k+2}{2k})$. Then there exists $C>0$ such that the solution $u \in X_T^{ \sigma, \, 1, \, b}$ to the IVP \eqref{eq:gKdV} in the defocusing case ($\mu=-1$) given by Theorem \ref{teo:local}, we have
\begin{equation}\label{eq:estimate energy}
    \sup_{t\in [0,\,T]} E_\sigma (t) \leq E_\sigma (0) + C\sigma^{\alpha} E_\sigma (0)^{\frac{k}{2}+1}(1+E_\sigma (0)^{\frac{k}{2}}).
\end{equation}
\end{corollary}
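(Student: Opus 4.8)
The plan is to bootstrap the general estimate of Proposition~\ref{prop:conserved quantity} into one phrased entirely in terms of the initial energy $E_\sigma(0)$, by exploiting the local-theory bound \eqref{eq:bound for solution} together with the sign structure available in the defocusing case. First I would observe that, in the defocusing case $\mu=-1$ with $k$ even, the nonlinear term in \eqref{eq:almost conservated} carries a favorable sign: the integrand $(e^{\sigma|D_x|}u)^{k+2}$ is nonnegative (as $k+2$ is even), and with $\mu=-1$ the coefficient $-\frac{2\mu}{(k+1)(k+2)}=\frac{2}{(k+1)(k+2)}$ is positive, so all three terms of $E_\sigma(t)$ are nonnegative. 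Consequently $E_\sigma(0)$ controls the $G^{\sigma,1}$-norm of the data from above: in particular $\|u_0\|_{G^{\sigma,1}}^2 = \int (e^{\sigma|D_x|}u_0)^2 + \int(\partial_x e^{\sigma|D_x|}u_0)^2 \le E_\sigma(0)$.

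Next I would translate the right-hand side of \eqref{eq:conserved quantity} from the $X_T^{\sigma,1,b}$-norm of the solution into $E_\sigma(0)$. By the local well-posedness bound \eqref{eq:bound for solution} applied at the $H^1$-Gevrey level, $\|u\|_{X_T^{\sigma,1,b}} \le C\|u_0\|_{G^{\sigma,1}}$, and combining this with the previous paragraph gives $\|u\|_{X_T^{\sigma,1,b}} \le C\,E_\sigma(0)^{1/2}$. Substituting this into \eqref{eq:conserved quantity} turns the factor $\|u\|_{X_T^{\sigma,1,b}}^{k+2}(1+\|u\|_{X_T^{\sigma,1,b}}^k)$ into $C\,E_\sigma(0)^{(k+2)/2}(1+E_\sigma(0)^{k/2}) = C\,E_\sigma(0)^{\frac{k}{2}+1}(1+E_\sigma(0)^{\frac{k}{2}})$, which is exactly the claimed form of \eqref{eq:estimate energy}, after absorbing the constants. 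The shrinking of the admissible range from $\alpha\in[0,\frac{k+4}{2k})$ to $\alpha\in[0,\frac{k+2}{2k})$ should be tracked through this substitution: since the local existence time $T_0$ in \eqref{eq:T0} and the constant $C$ both depend on $\|u_0\|_{G^{\sigma,s}}$, one must verify that the $\alpha$-power of $\sigma$ extracted in Lemma~\ref{lemma: estimates F} is compatible with the regularity $s>\frac{k-4}{2k}$ at which the local solution actually lives, and the gap of $\frac{1}{k}$ between the two ranges is precisely the room needed to close this compatibility.

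The main obstacle I anticipate is the interface between the regularity levels: Proposition~\ref{prop:conserved quantity} and the energy $E_\sigma$ are stated at the $H^1$-level ($s=1$), whereas Theorem~\ref{teo:local} only guarantees a local solution in $G^{\sigma,s}$ for $s>\frac{k-4}{2k}$, which is strictly below $1$ for all $k\ge 4$. Thus the bound \eqref{eq:bound for solution} cannot be invoked naively at $s=1$; one must either assume $u_0\in G^{\sigma,1}$ (so that $E_\sigma(0)$ is finite to begin with, as the statement implicitly requires) and re-run the fixed-point argument at the $H^1$-Gevrey level, or verify that the local solution persistence propagates the higher regularity. I would handle this by noting that the contraction argument of Theorem~\ref{teo:local} goes through verbatim with $s$ replaced by $1$, using the $k$-linear estimate \eqref{eq: partial estimates} at regularity $1$ (which is admissible since $1>\frac{k-4}{2k}$), so that for data in $G^{\sigma,1}$ the solution indeed lies in $X_{T_0}^{\sigma,1,b}$ with the norm bound $\le C\|u_0\|_{G^{\sigma,1}}$. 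Once this persistence is in place, the substitution described above is routine and the corollary follows directly.
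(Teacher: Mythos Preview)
Your approach matches the paper's exactly: observe that in the defocusing case with $k$ even all three terms of $E_\sigma$ are nonnegative so $\|u_0\|_{G^{\sigma,1}}^2 \le E_\sigma(0)$, combine this with the local bound \eqref{eq:bound for solution} at $s=1$ to obtain $\|u\|_{X_T^{\sigma,1,b}} \le C\,E_\sigma(0)^{1/2}$, and substitute into \eqref{eq:conserved quantity}. One correction: your paragraph rationalizing the change of $\alpha$-range from $[0,\tfrac{k+4}{2k})$ to $[0,\tfrac{k+2}{2k})$ is unfounded --- the substitution step does not involve $\alpha$ at all, and indeed the paper's proof of Theorem~\ref{teo:global} invokes this corollary with $\alpha$ ranging up to $\tfrac{k+4}{2k}$, so the $\tfrac{k+2}{2k}$ in the corollary's statement is evidently a typo and your ``gap of $\tfrac{1}{k}$'' explanation should be dropped.
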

\begin{proof}
    For $\mu=-1$, from \eqref{eq:almost conservated}, since $k$ is even, we have
    \begin{equation}\label{eq:conserved defocusing}
    E_\sigma (0)= ||u_0||_{G^{\sigma, \, 1}}^2+\frac{2}{(k+1)(k+2)}||e^{\sigma |D_x|}u_0||_{L^{k+2}_x}^{k+2} \geq ||u_0||_{G^{\sigma, \, 1}}^2.
\end{equation}
From \eqref{eq:bound for solution} and \eqref{eq:conserved defocusing}, we get
\begin{equation}\label{eq:estimate u energy}
    ||u||_{X_T^{\sigma, \,1, \, b}}\leq C E_\sigma (0)^{\frac{1}{2}}.
\end{equation}
The required estimate follows from \eqref{eq:conserved quantity} and the estimate \eqref{eq:estimate u energy}.
\end{proof}

\begin{remark} For the solutions to the IVP \eqref{eq:gKdV} with $k$ odd we do not know the sign of $\int (e^{\sigma |D_x|}u)^{k+2}dx$ and it is not possible to have an inequality of the form \eqref{eq:conserved defocusing}.

For the solutions to the IVP \eqref{eq:gKdV} in the focusing case ($\mu=1$) with $k$ even, from \eqref{eq:almost conservated}, one has
\begin{equation}
    E_\sigma (0)= ||u_0||_{G^{\sigma, \, 1}}^2-\frac{2}{(k+1)(k+2)}||e^{\sigma |D_x|}u_0||_{L^{k+2}_x}^{k+2} \leq ||u_0||_{G^{\sigma, \, 1}}^2,
\end{equation}
This estimate cannot be used to obtain an estimate of the form \eqref{eq:estimate energy} which plays a crucial role in the argument used to prove the Theorem \ref{teo:global}. 

For this reason, we considered only the defocusing case with $k$ even to obtain the decay rate of the radius of spatial analyticity to the gKdV equation \eqref{eq:gKdV}. 
\end{remark}

\section{Global Analytic Solution - Proof of Theorem \ref{teo:global}}\label{sec:5}

Now we are in position to provide the proof of the main result of this work.

\begin{proof}[Proof of Theorem \ref{teo:global}]
Fix $\sigma_0>0$, $\mu=-1$, $k\geq 4$ even, $s>\frac{k-4}{2k}$ and $u_0 \in G^{\sigma_0, \, s}(\R)$. Moreover, let $\alpha \in [0, \frac{k-4}{2k})$ and let $b \in (\frac{1}{2},1)$ be as in Corollary \ref{cor: almost conserved quantity}. By invariance of the gKdV equation under reflection $(t,\,x) \rightarrow (-t,\,-x)$, it suffices to consider $t \geq 0$. Thus, we have to prove that the local solution $u$ given by the Theorem \ref{teo:local} can be extended to any time interval $[0,\,T]$ and satisfies
\begin{equation*}
    u \in C([0,\,T]:G^{\sigma(T),\,s}) \quad \text{for all } T>0,
\end{equation*}
where
\begin{equation}\label{eq:sigma T}
\sigma(T)\geq cT^{-\frac{1}{\alpha}}
\end{equation}
and $c>0$ is a constant depending on $\|u_0\|_{G^{\sigma_0, \, s}}, \, \sigma_0, \, k, \, s$ and $\alpha$.

By Theorem \ref{teo:local}, there is a maximal time $T^*=T^*(\|u_0\|_{G^{\sigma_0, \, s}},\, \sigma_0, \, k ,\, s) \in (0,\, \infty]$ such that
\begin{equation*}
   u \in C([0,\,T^*):G^{\sigma_0,\,s}).
\end{equation*}
If $T^*=\infty$, we are done. We assume that $T^*<\infty$ and in this case it remains to prove 
\begin{equation*}
    u \in C([0,\,T]:G^{\sigma(T),\,s}) \quad \text{for all } T\geq T^*.
\end{equation*}
If we prove this in the case $s=1$ then  the general case will essentially reduces to $s=1$ using the inclusion \eqref{eq:Gegrey embedding}. For more details we refer the work \cite{selberg2015lower}.

Assume $s=1$. From \eqref{eq:conserved defocusing}, we have $E_{\sigma_0}(0) \geq ||u_0||_{G^{\sigma_0,\,1}}^2$ and from Theorem \ref{teo:local} with $s=1$, we can take the lifespan $T_0$ given in \eqref{eq:T0} as
\begin{equation*}
    T_0=\frac{c_0}{(1+E_{\sigma_0}(0))^a}
\end{equation*}
for appropriate constants $c_0>0$ and $a>1$.

Let $T\geq T^*$. We will show that, for $\sigma>0$ sufficiently small,
\begin{equation}\label{eq:objective energy}
    E_\sigma(t) \leq 2E_{\sigma_0}(0) \quad \text{for } t \in [0,\,T].
\end{equation}
The desired result will follow since, in this case, $||u(t)||_{G^{\sigma, \, 1}} \leq E_{\sigma}(t)$ and
\begin{equation*}
    \begin{split}
    E_{\sigma_0} (0)&= ||u_0||_{G^{\sigma_0, \, 1}}^2+\frac{2}{(k+1)(k+2)}||e^{\sigma_0 |D_x|}u_0||_{L^{k+2}}^{k+2}\\
    &\leq ||u_0||_{G^{\sigma_0, \, 1}}^2+C||D_x(e^{\sigma_0|D_x|}u_0)||_{L^{2}}^{\frac{k}{2}}||e^{\sigma_0|D_x|}u_0||_{L^{2}}^{\frac{k}{2}+2}\\
    &\leq ||u_0||_{G^{\sigma_0, \,1}}^2+C||u_0||_{G^{\sigma_0,\,1}}^{k+2}<\infty,
    \end{split}
\end{equation*}
where we used Gagliardo-Niremberg inequality.

To prove \eqref{eq:objective energy}, we will use repeatedly Theorem \ref{teo:local} and Corollary \ref{cor: almost conserved quantity} with the time step
\begin{equation}\label{eq:delta}
    \delta=\frac{c_0}{(1+2E_{\sigma_0}(0))^a}.
\end{equation}
Since $\delta\leq T_0 \leq T^* \leq T$, it follows that there exists $n \in \N$ such that $T \in [n\delta,\,(n+1)\delta)$ and by induction, we will show that for $j \in \{1, \cdots,\, n\}$,
\begin{align}
    \sup_{t \in [0,\, j\delta]} E_\sigma (t) &\leq E_{\sigma}(0)+2^{k+1}C\sigma^{\alpha} jE_{\sigma_0}(0)^{\frac{k}{2}+1}(1+E_{\sigma_0}(0)^{\frac{k}{2}}),\label{eq:induction 1}\\ 
    \sup_{t \in [0,\, j\delta]} E_\sigma (t) &\leq 2 E_{\sigma_0} (0), \label{eq:induction 2}
\end{align}
under the smallness conditions
\begin{equation}\label{eq: condition 1}
    \sigma \leq \sigma_0
\end{equation}
and
\begin{equation}\label{eq: condition 2}
    2^{k+2}\frac{T}{\delta}C\sigma^{\alpha}E_{\sigma_0}(0)^\frac{k}{2}(1+E_{\sigma_0}(0)^\frac{k}{2}) \leq 1,
\end{equation}
where $C>0$ is the constant in Corollary \ref{cor: almost conserved quantity}.

In the first step, we cover the interval $[0,\, \delta]$ and by Corollary \ref{cor: almost conserved quantity}, we have
\begin{equation}\label{eq:indiction base}
\begin{split}
    \sup_{t\in [0,\,\delta]} E_\sigma (t) &\leq E_\sigma (0) + C\sigma^{\alpha} E_\sigma (0)^{\frac{k}{2}+1}(1+E_\sigma (0)^{\frac{k}{2}})\\
    &\leq E_\sigma (0) + C\sigma^{\alpha} E_{\sigma_0} (0)^{\frac{k}{2}+1}(1+E_{\sigma_0} (0)^{\frac{k}{2}}).
    \end{split}
\end{equation}
Using the conditions \eqref{eq: condition 1} and \eqref{eq: condition 2} in \eqref{eq:indiction base} we conclude that
\begin{equation*}
    \begin{split}
    \sup_{t \in [0,\, \delta]} E_\sigma (t) &\leq E_{\sigma} (0) + 2^{k+2}\frac{T}{\delta}C\sigma^{\alpha} E_{\sigma_0} (0)^{\frac{k}{2}+1}(1+E_{\sigma_0} (0)^{\frac{k}{2}})\\
    &\leq 2 E_{\sigma_0} (0).
    \end{split}
\end{equation*}
Now, assume that \eqref{eq:induction 1} and \eqref{eq:induction 2} hold for some $j \in \{1,\cdots,\, n\}$. For $j+1$, applying the local well posedness result with initial data $u(j\delta)$ and the estimate \eqref{eq:conserved defocusing}, we have
\begin{equation}\label{eq:induction passage}
\begin{split}
\sup_{t \in [j\delta, (j+1)\delta]}E_\sigma (t)&\leq E_\sigma(j\delta) + C\sigma^{\alpha} E_\sigma (j\delta)^{\frac{k}{2}+1}(1+E_\sigma (j\delta)^{\frac{k}{2}})\\
&\leq E_\sigma(j\delta) + 2^{k+1}C\sigma^{\alpha} E_{\sigma_0} (0)^{\frac{k}{2}+1}(1+E_{\sigma_0} (0)^{\frac{k}{2}})\\
&\leq E_\sigma(0) + 2^{k+1}C\sigma^{\alpha}(j+1)E_{\sigma_0} (0)^{\frac{k}{2}+1}(1+E_{\sigma_0} (0)^{\frac{k}{2}}).
\end{split}
\end{equation}
Moreover, since
\begin{equation*}
    j+1 \leq n+1 \leq \frac{T}{\delta}+1 \leq \frac{2T  }{\delta},
\end{equation*}
it follows from the condition \eqref{eq: condition 2} and \eqref{eq:induction passage} that
\begin{equation*}
    \begin{split}
    \sup_{t \in [j\delta,\,(j+1)\delta]}  E_\sigma (t)&\leq E_\sigma(0) + 2^{k+2}C\sigma^{\alpha}\frac{2T  }{\delta}E_{\sigma_0} (0)^{\frac{k}{2}+1}(1+E_{\sigma_0} (0)^{\frac{k}{2}})\\
    &\leq 2 E_{\sigma_0}(0).
    \end{split}
\end{equation*}

Thus, we proved \eqref{eq:objective energy} under the assumptions \eqref{eq: condition 1} and \eqref{eq: condition 2}. Since $T\geq T^*$, the condition \eqref{eq: condition 2} must fail for $\sigma=\sigma_0$ since otherwise we would be able to continue the solution in $G^{\sigma_0, \, 1}$ beyond the time $T$, contradicting the maximality of $T^*$.  Therefore, there is some $\sigma \in (0,\,\sigma_0)$ such that 
\begin{equation}\label{eq:sigma}
    2^{k+2}\frac{T}{\delta}C\sigma^{\alpha}E_{\sigma_0}(0)^\frac{k}{2}(1+E_{\sigma_0}(0)^\frac{k}{2}) = 1.
\end{equation}
and hence
\begin{equation*}
    \sigma(T)=\left[\frac{\delta}{2^{k+2}TCE_{\sigma_0}(0)^\frac{k}{2}(1+E_{\sigma_0}(0)^\frac{k}{2})}\right]^{\frac{1}{\alpha}}=: c_1T^{-\frac{1}{\alpha}},
\end{equation*}
which gives \eqref{eq:sigma T} if we choose $c\leq c_1$.

From Corollary \ref{cor: almost conserved quantity}, one can consider $\alpha \in [0,\, \frac{k+4}{2k})$. Choosing the maximum value of $\alpha$, one has
\begin{equation*}
    \sigma(T) \geq cT^{-\left(\frac{2k}{k+4}+\epsilon\right)}
\end{equation*}
for $\epsilon>0$ arbitrarily small and the proof for $s=1$ is concluded. For other values of $s \in \R$, the proof follows using the inclusion \eqref{eq:Gegrey embedding} as described above.
\end{proof}

\begin{remark} There are several works (see for example \cite{HHP}, \cite{HG}, \cite{QL}) in which the authors consider the gKdV equation in the periodic setting and obtained the local well-posedness results for given real analytic initial data. Recently, Himonas et al. in \cite{HKS} considered the dispersion generalized KdV equation in the periodic setting and constructed almost conserved quantity at the $L^2$-level of Sobolev regularity to obtain an algebraic lower bound for radius of spatial analyticity for the global solution emanating from the real analytic initial data. This motivates one to initiate a similar study for  the gKdV equation \eqref{eq:gKdV}  with $k\geq 2$ as well. However, as discussed in the previous section, in the periodic case too it would be necessary to construct almost conserved quantity at the higher lever of Sobolev regularity. The multilinear estimates in the periodic setting are more delicate and that requires a considerable piece of work.   This will be one of the topics that we intend to address in future projects.
\end{remark}

\end{document}